\newtheorem{theorem}{Theorem}[section]
\newtheorem{corollary}[theorem]{Corollary}
\newtheorem{lemma}[theorem]{Lemma}
\newtheorem{proposition}[theorem]{Proposition}
\theoremstyle{definition}
\newtheorem{definition}[theorem]{Definition}
\newtheorem{remark}[theorem]{Remark}
\newtheorem*{notation}{Notation}
\newcommand{\B}{\mathcal{B}}
\newcommand{\G}{\mathcal{G}}
\newcommand{\C}{\mathcal{C}}
\newcommand{\Oy}{\mathscr{O}_{Y}}
\newcommand{\Ox}{\mathscr{Oo}_{C}}
\newcommand{\slas}{/\!\!/ }
\newcommand{\N}{\mathcal N}
\newcommand{\CC}{\mathscr C}
\newcommand{\F}{\mathscr{F}}
\newcommand{\E}{\mathscr{E}}
\newcommand{\Oo}{\mathscr{O}}
\newcommand{\Ss}{\mathbf{Spec}}
\newcommand{\BB}{\mathscr B}
\newcommand{\LL}{\mathscr L}
\titleformat{\section}[display]{\scshape \bfseries}{\S \thesection\filcenter}{1ex}{\fillast}
\titleformat{\subsection}[hang]{\itshape\bfseries}{\thesubsection. --- }{0pt}{\upshape}
\titleformat{\subsubsection}[runin]{\itshape\bfseries}{\thesubsubsection. }{0pt}{\upshape}
\newcommand{\eq}[1][r]
   {\ar@<-3pt>@{-}[#1]
    \ar@<-1pt>@{}[#1]|<{}="gauche"
    \ar@<+0pt>@{}[#1]|-{}="milieu"
    \ar@<+1pt>@{}[#1]|>{}="droite"
    \ar@/^2pt/@{-}"gauche";"milieu"
    \ar@/_2pt/@{-}"milieu";"droite"}
\date{}
\title{\textbf{On the moduli space of singular principal $G$-bundles over stable curves}
\author{{\footnotesize ÁNGEL LUIS MU\~NOZ CASTA\~NEDA}\footnote{RIASC Universidad de Le\'on, Spain, mail: {\texttt amunc@unileon.es}}}
}
\begin{document}

\maketitle

\begin{center}
\begin{minipage}[b][1.5\height][t]{0.8\textwidth}
{\small {\normalsize A}{\scriptsize BSTRACT.} \ In this paper we proof the existence of a linearization for singular principal $G$-bundles not depending on the base curve. This allow us to construct the relative compact moduli space of $\delta$-(semi)stable singular principal G-bundles over families of reduced projective and connected nodal curves, and to reduce the construction of the universal moduli space over $\overline{\mathcal{M}}_{g}$ to the construction of the universal moduli space of swamps..}
\end{minipage}
\end{center}

{\textsl {\small Keywords:}} {\small Principal bundle, stable curve, universal moduli}

{\textsl {\small 2010 MSC: Primary 14H60; Secondary 14D20,	13A02} }


\tableofcontents

\section{\footnotesize INTRODUCTION}
\markboth{\emph{}}{} 

This work\footnote{This paper has been submitted to a peer-review journal} is focused on the construction of a compactification of the moduli space of principal $G$-bundles on a reduced projective and connected nodal curve. A. Schmitt realised in \cite{Alexander2}  that a principal $G$-bundle can always be seen as a pair formed by a vector bundle and certain morphism of sheaves of algebras, once a fully faithful representation $\rho\colon G\hookrightarrow \textrm{SL}(n)$ is fixed. He constructed the compact moduli space of $\delta$-semistable singular principal $G$-bundles over smooth projective varieties. Although this moduli space depends a priori on the fixed representation, he proved that coincides with the classical moduli space constructed by A. Ramanathan, as long as the rational parameter $\delta$ is large enough. 
Afterwards, he generalized this construction to the case of an irreducible nodal curve with one node \cite{Alexander1}, while T. G\'omez and I. Sols simplified and generalized A. Ramanathan's construction for higher dimensional smooth projective varieties \cite{sols2}. In 2013, A. Langer \cite{L}, gave a construction of a compact moduli space of $\delta$-semistable singular principal $G$-bundles in the case of a family of  irreducible projective varieties. 

The construction presented in this work will be based on the techniques developed in \cite{bosle, sols, L, Alexander2, Alexander4, Alexander1}.

The structure of the paper is as follows. In Section \ref{section2}  we recall some basic definitions and known results. Section \ref{section3}  is devoted to show that the local structure of a torsion free sheaf on a nodal curve can be enlarged to certain open affine neighborhoods (Theorem \ref{localstructure}). In Section \ref{section4} we show how many pieces of the symmetric algebra $S^{\bullet}(V\otimes\F)^{G}$ we need to generate it (Theorem \ref{gen}).  It is shown that the number of required pieces depends not on the curve or the sheaf, but on the genus of the curve and the rank of the sheaf. The main novelty of this paper is Theorem \ref{C}; results of Section \ref{section3} and Section \ref{section4} allow us to prove that the type $(a,b)$ of the associated swamp of a singular principal $G$-bundle on a nodal curve does not depend on the base curve. 
This, in turn, allows to prove, in Section \ref{section6}, the existence of the moduli space of $\delta$-semistable singular principal $G$-bundles over a family of reduced projective and connected nodal curves (Theorem \ref{D}).
In Section \ref{section7} we point out that Theorem \ref{C} and Theorem \ref{D} reduce the construction of the universal moduli space of $\delta$-semistable singular principal $G$-bundles to the construction of the universal moduli space of $\delta$-semistable swamps. This might be a good candidate for the compactification of the universal moduli space of principal $G$-bundles over $\overline{\mathcal{M}}_{g}$ \cite{mum-ir,gies}.

\section{\footnotesize PRELIMINARIES}\label{section2}
\markboth{\emph{}}{} 

\subsection{Basic definitions and notation}

Let $\Oo$ be a local noetherian ring with residue field $k$ and $M$ a finitely generated $\Oo$-module. The depth of $M$ is defined as 
$
\textrm{depth}(M):=\textrm{min}\{i: \textrm{Ext}^{i}_{\Oo}(k,M)\neq 0\}.
$
Its dimension $\textrm{dim}(M)$ is just the dimension of its support, $\textrm{supp}(M)\subset \textrm{Spec} \ \Oo$. We say that $M\neq 0$ is a Cohen-Macaulay module if $\textrm{depth}(M)=\textrm{dim}(M)$. The ring $\Oo$ is Cohen-Macaulay if it is a Cohen-Macaulay module. If $R$ is an arbitrary noetherian ring, then an $R$-module $M$ is Cohen-Macaulay if $M_{\mathfrak{m}}$ is a Cohen-Macaulay $R_{\mathfrak{m}}$-module for all $\mathfrak{m}\in \textrm{supp}(M)$. If $X$ is a scheme and $\F$ a coherent $\Oo_{X}$-module we can extend the definition of Cohen-Macaulay to $X$ and $\F$ in the obvious way.

Let $C$ be a reduced connected projective curve over $\mathbb{C}$ (and therefore Cohen-Macaulay) and let $\F$ be a coherent $\Oo_{C}$-module. We say that $\F$ has depth $1$ if $\textrm{depth}(\F_{x})=1$ for all closed points $x\in \textrm{supp}(\F)$. It is called torsion free if it is Cohen-Macaulay and $\textrm{supp}(\F)=C$. It is of uniform multirank $r$ (or just rank $r$) if $\textrm{dim}(\F_{\mu})=r$ for every generic point of an irreducible component of $C$. 

Let $\Oo_{C}(1)$ be an ample invertible sheaf on the nodal curve $C$. The Hilbert polynomial of a coherent sheaf $\F$ over $C$ with respect to $\Oo_{C}(1)$ is 
$$P(\F,n):=h^{0}(C,\F(n))-h^{1}(C,\F(n)),$$
and the genus of $C$ is defined as $g:=1-\chi(\Oo_{C})$, $\chi(\Oo_{C}):=P(\Oo_{C},0)$ being its Euler characteristic.

A reduced projective and connected curve is said to be nodal if it has as much ordinary double points as singularities. Whenever we consider a nodal curve $C$, we will always be assuming that an ample invertible sheaf, $\Oo_{C}(1)$, has been fixed on it.

\subsection{Swamps and singular principal $G$-bundles}
Let $C$ be a nodal curve over $\mathbb{C}$, $P$ a polynomial with integral coefficients of degree one, and $\LL$ a locally free sheaf on $C$. We also fix natural numbers $a,b\in\mathbb{N}$.

A swamp (or tensor field) over $C$ is a pair $(\F,\phi)$ where $\F$ is a coherent $\Ox$-module of uniform multirank $r$, with Hilbert polynomial  $P$ and  a non-zero  morphism of $\Ox$-modules,
$\phi\colon (\F^{\otimes a})^{\oplus b}\rightarrow \LL$.
From now on we will say that $P$ is the Hilbert polynomial of the swamp $(\F,\phi)$ and $r$ is its rank.
Let $(\F,\phi)$ and $(\G,\gamma)$ be two swamps with the same Hilbert polynomial $P$. A morphism between them is a pair $(f,\alpha)$ where $\alpha\in \mathbb{C}$ and $f\colon \F\rightarrow\G$ is a morphism such that the following square commutes,
\begin{equation}\label{defiso}
\xymatrix{
(\F^{\otimes a})^{\oplus b}\ar[r]^{(f^{\otimes a})^{\oplus b}}\ar[d]^{\phi} &  (\G^{\otimes a})^{\oplus b}\ar[d]^{\gamma} \\
\LL\ar[r]^{\alpha \textrm{Id}} & \LL \ .
}
\end{equation}

Let $\F$ be a coherent $\Ox$-module on $C$. A weighted filtration, $(\F_{\bullet},\underline{m})$, of $\F$ is a filtration
\begin{equation*}
\F_{\bullet}\equiv (0)\subset\F_{1}\subset\F_{2}\subset\hdots\subset \F_{t}\subset\F_{t+1}=\F,
\end{equation*} 
equipped with positive numbers $\underline{m}=(m_{1}\hdots,m_{t})\in\mathbb{Q}^{t}_{>0}$. We adopt the following convention: the one step filtration is always equipped with $m=1$. A filtration is called saturated if the quotients $\F/\F_{i}$ are torsion free sheaves.

Consider now a swamp over $C$, $\phi\colon (\F^{\otimes a})^{\oplus b}\rightarrow \LL$,  and let $(\F_{\bullet},\underline{m})$ be a weighted filtration. For each $\F_{i}$, we denote by $\alpha_{i}$ its multiplicity  and just by $\alpha$ the multiplicity of $\F$. Define the vector 
$\Gamma=\sum_{1}^{t}m_{i}\Gamma^{(\alpha_{i})}$,
where 
$$\Gamma^{(l)}=(\overset{l}{\overbrace{l-\alpha,\hdots,l-\alpha}},\overset{\alpha-l}{\overbrace{l,\hdots,l}}).$$
Let us denote by $J$ the set
$J=\{\textrm{ multi-indices }I=(i_{1},\hdots,i_{a})|I_{j}\in\{1,\hdots,t+1\}\}.$
Define,
\begin{equation}
\mu(\F_{\bullet},\underline{m},\phi):= \textrm{min}_{I\in J}\{\Gamma_{\alpha_{i_{1}}}+\hdots +\Gamma_{\alpha_{i_{a}}}|\phi|_{(\F_{i_{1}}\otimes \hdots\otimes \F_{i_{a}})^{\oplus b}}\neq 0\}.
\end{equation}

\begin{definition}\label{semitensor}
Let $\delta\in\mathbb{Q}_{>0}$ be a positive rational number. A swamp $(\F,\phi)$ is $\delta$-(semi)stable if for each weighted filtration $(\F_{\bullet},\underline{m})$ the following holds
\begin{equation}\label{semiestabilidad}
\sum_{1}^{t}m_{i}(\alpha P_{\F_{i}}-\alpha_{i}P)+ \delta\mu(\F_{\bullet},\underline{m},\phi) (\leq)0.
\end{equation}
\end{definition}

Moduli spaces of swamps have been constructed under different assumptions in \cite{bosle,sols,L}. The most general result is given in \cite[Theorem 3.5]{L}, where A. Langer proves the existence of such a moduli space for relative schemes $X\rightarrow S$. The interesting part for us is the one regarding the one dimensional case.

\begin{theorem}\label{teolanger}
Let $P$ be a polynomial with integral coefficients of degree one, $f:\CC\rightarrow S$ a projective morphism of $k$-schemes of finite type with geometrically connected fibers of pure dimension one, and $\Oo_{\mathscr{C}}(1)$ a relative ample invertible sheaf. Let $\LL$ be a flat family of invertible sheaves on $\CC$ and $\delta\in\mathbb{Q}_{>0}$. There exists a coarse projective moduli space for $\delta$-semistable swamps $(\F,\phi)$ with Hilbert polynomial $P$.
\end{theorem}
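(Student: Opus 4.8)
The plan is to present the moduli functor as a good quotient, in the sense of geometric invariant theory, of a parameter scheme, the whole construction being carried out relatively over the base $S$ (which is of finite type over $k$, hence Noetherian) by means of relative Quot schemes and the relative form of Mumford's quotient theorem \cite{mum-ir}. The first ingredient is boundedness: the torsion free sheaves $\F$ of rank $r$ and Hilbert polynomial $P$ that can occur in a $\delta$-semistable swamp over a geometric fibre $\CC_{s}$ form a bounded family. Indeed, applying the inequality \eqref{semiestabilidad} to the one step filtration determined by an arbitrary subsheaf $\F'\subset\F$, and noting that $\delta\,\mu(\F_{\bullet},\underline m,\phi)$ is nonnegative and bounded above by a constant depending only on $a$, $b$ and $P$, one obtains a Grothendieck type lower bound for $P_{\F'}$; the standard boundedness results for families of sheaves of bounded slope then yield a single bounded family over all of $S$. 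Consequently there is $N\gg 0$ such that, for every such swamp, $\F(N)$ is globally generated, $H^{1}(\CC_{s},\F(N))=0$, and $h^{0}(\CC_{s},\F(N))=P(N)=:p$.

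Fixing an isomorphism $\mathbb{C}^{p}\cong H^{0}(\CC_{s},\F(N))$ then exhibits $\F$ as a quotient $\Oo_{\CC_{s}}(-N)^{\oplus p}\twoheadrightarrow\F$, so such sheaves are parametrized by a locally closed subscheme $\mathfrak{Q}$ of the relative Quot scheme $\mathrm{Quot}_{\CC/S}\bigl(\Oo_{\CC}(-N)^{\oplus p},P\bigr)$, which is projective over $S$; write $\widetilde{\F}$ for the universal quotient on $\CC\times_{S}\mathfrak{Q}$ and $q$ for the projection to $\mathfrak{Q}$. Since a morphism of swamps is allowed to rescale $\LL$ by a nonzero scalar, the decoration $\phi$ matters only up to homothety, and --- after enlarging $N$ so that the relevant direct image is locally free and commutes with base change --- the pairs $(\F,\phi)$ are parametrized by the projective bundle $\mathfrak{P}:=\mathbb{P}\bigl(q_{*}\Hh((\widetilde{\F}^{\otimes a})^{\oplus b},\LL)^{\vee}\bigr)\to\mathfrak{Q}$. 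The group $\mathrm{SL}(p)$ acts on $\mathfrak{P}$ over $S$, its orbits corresponding exactly to isomorphism classes of swamps, and $\mathfrak{P}$ carries an $\mathrm{SL}(p)$-linearized line bundle $\mathcal{L}_{n_{1},n_{2}}$, ample over $S$, obtained by combining, with positive integer weights $n_{1}$ and $n_{2}$, the pullback from $\mathfrak{Q}$ of a Grothendieck line bundle $\det\bigl(q_{*}\widetilde{\F}(m)\bigr)$ --- for an auxiliary $m\gg N$ --- with the tautological bundle $\Oo_{\mathfrak{P}}(1)$; the ratio $n_{2}/n_{1}$ is to be chosen so as to encode $\delta$.

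The heart of the matter is then the Hilbert--Mumford computation identifying the GIT (semi)stable locus of $(\mathfrak{P},\mathcal{L}_{n_{1},n_{2}})$ with the locus of $\delta$-(semi)stable swamps. A one parameter subgroup of $\mathrm{SL}(p)$ amounts, after choice of an adapted basis of $\mathbb{C}^{p}$, to a weighted filtration of $\mathbb{C}^{p}$, which through the universal quotient induces a weighted filtration $(\F_{\bullet},\underline m)$ of $\F$; expanding the Mumford weight $\mu(\lambda,x)$ one finds, via the estimates of Gieseker and Simpson, that the $\mathfrak{Q}$-part of the linearization contributes --- up to terms of lower order in $m$ --- the sheaf term $\sum_{1}^{t}m_{i}(\alpha P_{\F_{i}}-\alpha_{i}P)$ of \eqref{semiestabilidad}, while the $\Oo_{\mathfrak{P}}(1)$-part contributes the decoration term $\mu(\F_{\bullet},\underline m,\phi)$. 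With the orderings $N\ll m$ and $\delta$ fixed in terms of $n_{2}/n_{1}$ by an explicit factor depending only on $a$, $b$ and $P$, the total weight becomes, up to a nonzero factor, the left-hand side of \eqref{semiestabilidad}, so the two numerical criteria agree; one also checks conversely that a GIT-semistable point lying over $\mathfrak{Q}$ has $\widetilde{\F}$ torsion free with the prescribed cohomology, hence genuinely arises from a swamp. By the relative version of Mumford's theorem the good quotient $\mathcal{M}:=\mathfrak{P}^{ss}/\!\!/\,\mathrm{SL}(p)$ then exists and is projective over $S$, corepresents the moduli functor of $\delta$-semistable swamps of Hilbert polynomial $P$ (with closed orbits giving the $S$-equivalence classes), and is therefore the asserted coarse projective moduli space. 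The two steps I would expect to be the real obstacles are the relative boundedness just invoked --- delicate because the fibres are singular and $\F$ is merely torsion free --- and the bookkeeping in the Hilbert--Mumford estimate, namely disentangling the two contributions to the weight and pinning down the admissible regime $N\ll m$ together with the precise normalization of $\delta$; both are treated in detail in \cite{bosle,sols,L}.
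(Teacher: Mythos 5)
Your proposal is a correct outline of the standard GIT construction, but it is worth noting that the paper itself does not prove this statement at all: its ``proof'' is the single line ``This is part of [Theorem 3.5] of Langer,'' so the theorem is imported wholesale from \cite{L}. What you have written is essentially a reconstruction of the argument carried out in the cited sources \cite{bosle,sols,L} --- boundedness from the one-step-filtration instance of \eqref{semiestabilidad}, a relative Quot scheme, a projective bundle parametrizing the decoration up to homothety, a two-parameter linearization, and the Hilbert--Mumford matching of GIT (semi)stability with $\delta$-(semi)stability --- so it is the same route as the quoted proof rather than a different one. Two small caveats if you were to flesh it out. First, the inequality \eqref{semiestabilidad} applied to a one-step filtration gives an \emph{upper} bound on $P_{\F'}$ for subsheaves $\F'\subset\F$ (equivalently a bound on $\mu_{\max}$), not a lower bound as written; the direction matters for invoking the boundedness theorems, and the genuinely delicate point --- which is exactly what Langer's paper contributes --- is making this uniform over a base with singular, reducible fibres where $\F$ is only torsion free. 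Second, the sheaf $q_{*}\Hh\bigl((\widetilde{\F}^{\otimes a})^{\oplus b},\LL\bigr)$ need not become locally free or compatible with base change just by enlarging $N$, because $\widetilde{\F}^{\otimes a}$ acquires torsion at the nodes; the constructions in \cite{sols,L,Asch} avoid this by parametrizing $\phi$ through the induced map on global sections $\bigl((W)^{\otimes a}\bigr)^{\oplus b}\rightarrow H^{0}(\LL(aN))$, i.e.\ by a Hom of fixed finite-dimensional vector spaces. Neither point invalidates your architecture, but both are the places where the cited proof does real work.
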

\begin{proof}
This is part of \cite[Theorem 3.5]{L}.
\end{proof}

Let $C$ be a nodal curve over $\mathbb{C}$, $G$ a (semisimple) reductive group, $V$ a vector space of dimension $r$ and $\rho:G\hookrightarrow \textrm{SL}(V)$ a fully faithful representation.

\begin{definition}\label{maindef}
A singular principal $G$-bundle over $C$ is a pair $(\F,\tau)$ where $\F$ is a torsion free sheaf of rank $r$ and $\tau$ is a morphism of $\Oo_{\C}$-algebras,
$\tau:S^{\bullet}(V\otimes\F)^{G}\rightarrow\Oo_{\C},$
which is not just the projection onto the zero degree component.
\end{definition}
Singular principal $G$-bundles appear when we try to describe principal $G$-bundles as vector bundles with an extra structure (see \cite{Alexander2}). In fact, if we assume that $\F$ is locally free and the geometric counterpart of $\tau$, $\tau:C\rightarrow \underline{\textrm{Hom}}_{\Oo_{C}}(V\otimes\Oo_{C},\F^{\vee})\slas G$, takes values in the space of local isomorphisms, then we can recover the concept of principal $G$- bundle (see \cite{Alexander2}),
$$
\xymatrix{
\mathcal{P}(\F,\tau)\ar[r]\ar[d]_{\textrm{principal }\\ G \textrm{ bundle }} & \textrm{\underline{Isom}}_{\Ox}(V\otimes\Ox,\F^{\vee})\ar[d]^{\textrm{ principal }\\ G \textrm{ bundle}} \\
C\ar[r]^{\hspace{-2cm}\tau} &  \textrm{\underline{Isom}}_{\Ox}(V\otimes\Ox,\F^{\vee})\slas G \ .}
$$
Since considering only locally free sheaves leads to a non-compact moduli space, we need to add objects in order to get a compactification. Then, we define (see \cite{bosle, Alexander2, Alexander4} for the smooth and irreducible cases),
\begin{definition}
An honest singular principal $G$-bundle over $C$ is a singular principal $G$-bundle, $(\F,\tau)$, satisfying that $\tau:C\rightarrow \underline{\textrm{Hom}}_{\Oo_{C}}(V\otimes\Oo_{C},\F^{\vee})\slas G$ takes values in the space of local isomorphisms outside the nodes.
\end{definition}
One can define a semistability condition for honest singular principal $G$-bundles which, in case $C$ is smooth, leads to the same moduli space as the classical one constructed by Ramanathan (see \cite{Rama,Alexander2}).  

Since semistable honest singular principal $G$-bundles are difficult to deal with, we have to consider all singular principal $G$-bundles. We will show that we can assign (injectively up to isomorphisms) a swamp of certain type to every singular principal $G$-bundle, as happens in the irreducible case (see \cite{bosle, Alexander2}). Thus, we will say that a singular principal $G$-bundle is $\delta$-semistable if the associated swamp is. We will prove, in Theorem \ref{C}, that the type $(a,b)$ of the associated swamp does not depend on the base curve, but just on its genus. This let us to construct the relative moduli space for singular principal $G$-bundles. 

\section{\footnotesize EXTENDING THE LOCAL STRUCTURE OF TORSION FREE SHEAVES}\label{section3}
\markboth{\emph{}}{}

Let $C$ be a nodal curve, and let $\F$ be a torsion free sheaf on $C$ of rank $r$. C. S. Seshadri showed (see \cite[Chapter 8]{Seshadri}) that for each nodal point $x$ (regardless of how many components this point lies on), there is a natural number $0\leq a\leq r$ such that 
$
\F_{x}\simeq\mathscr{O}_{C,x}^{(r-a)}\oplus\mathfrak{m}_{x}^{a}.
$
This leads to the following definition.

\begin{definition}\label{structureconstants}
A torsion free sheaf of rank $r$ is of type $a_i$ at a nodal point $x_{i}$ if $\F_{x}\simeq \mathscr{O}_{C,x}^{(r-a_{i})}\oplus\mathfrak{m}_{x}^{a_i}$.
\end{definition}

\begin{lemma}\label{abierto}
If $\F$ is a torsion free sheaf on $C$ and $x$ is a node that lies in only one irreducible component $C'$ (respectively, that lies in two irreducible components $C'$, $C''$), then for each open subset $U$ such that the only node contained in it is $x$, and is contained in $C'$ (respectively contained in $C'\cup C''$), we have
$\F_{U}\subset \F_{x}$.
\end{lemma}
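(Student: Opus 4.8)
The plan is to reduce the statement to a local computation at the node $x$ together with a flatness/torsion-freeness argument on the complement. Let me set up notation: write $A = \mathscr{O}_{C,x}$, the local ring at the node. The key algebraic input is Seshadri's structure theorem quoted above, namely $\F_x \simeq A^{(r-a)} \oplus \mathfrak{m}_x^{a}$, so it suffices to treat the two building blocks $A$ itself and $\mathfrak{m}_x$; the case of a general torsion free sheaf follows by taking direct sums (localization and the formation of sections over $U$ both commute with finite direct sums). For the free summand there is nothing to prove, since $\Oo_C(U) \subset \Oo_{C,x}$ is just the statement that a regular function on $U$ is determined by (and is) its germ at $x$, using that $U$ is reduced and $x$ lies in every component meeting $U$ (this is exactly the hypothesis: $U$ contains $x$ and is contained in $C'$, resp. $C' \cup C''$, so no component of $U$ avoids $x$).

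Next I would treat the summand $\mathfrak{m}_x$, viewed as the (sheaf of) the ideal sheaf of the point $x$, or more precisely the torsion free rank-one sheaf on $C'$ (resp. on $C' \cup C''$) whose stalk at $x$ is $\mathfrak{m}_x$ and which agrees with $\Oo$ away from $x$. Concretely, on the curve $\tilde C$ obtained by normalizing $C$ at $x$ only, this sheaf is the pushforward of a line bundle, so its sections over $U$ embed into its sections over $\tilde U$ (the preimage of $U$), and the latter is a torsion free — in fact locally free — sheaf on a reduced curve, hence has no nonzero sections supported away from the generic points. The inclusion $\F_U \hookrightarrow \F_x$ then follows because a section of $\F$ over $U$ that dies in the stalk at $x$ would, after pulling back to $\tilde U$, be a section of a line bundle on a reduced curve vanishing at all preimages of $x$ and being regular elsewhere; combined with torsion-freeness (no embedded or torsion sections) this forces it to be zero. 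Equivalently, and perhaps cleaner: since $\F$ is torsion free it is a subsheaf of its "bidual" type object, and the restriction map $\Gamma(U,\F) \to \F_x$ is injective precisely because $U \setminus \{x\}$ is dense in $U$ and $\F$ has no sections with support of dimension $0$.

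The main obstacle, and the point that needs care rather than a one-line invocation, is the case where $x$ lies on two components $C'$ and $C''$ and $U$ meets both: one must check that a section of $\F$ over $U$ is genuinely determined by its germ at $x$ and not merely by its germs at the two generic points. This is where the hypothesis that $U$ contains no node other than $x$ is essential — it guarantees that $U \setminus \{x\}$ is a disjoint union of smooth affine curves, each of which has $x$ in its closure, so $\mathscr{O}(U) \hookrightarrow \mathscr{O}_{C,x}$ by gluing the two germs along $\mathfrak{m}_x$ (the fiber product description of $\mathscr{O}_{C,x}$ at a node), and the same fiber-product description transports to $\F$ via Seshadri's normal form. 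I would phrase this by noting that $\mathfrak{m}_x$ is precisely the sheaf whose sections over such a $U$ are pairs of regular functions on the two branches vanishing at $x$, which visibly inject into the stalk $\mathfrak{m}_x$. Once this is isolated, the lemma is just the direct sum of the free case and this one, so the proof is short; the care is entirely in the bookkeeping of branches at the node.
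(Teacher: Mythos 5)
Your main line of argument has a circularity problem. Seshadri's theorem gives a decomposition of the \emph{stalk} only, $\F_{x}\simeq \mathscr{O}_{C,x}^{r-a}\oplus\mathfrak{m}_{x}^{a}$; to ``treat the two building blocks and take direct sums'' you would need the corresponding decomposition of $\F$ as a sheaf over $U$ (equivalently, of the module $\F_{U}$), namely $\F_{U}\simeq\mathscr{O}_{U}^{r-a}\oplus\overline{\mathfrak{m}}_{x}^{a}$. That sheaf-level statement is exactly Theorem \ref{localstructure}, which the paper proves \emph{from} Lemma \ref{abierto}: one needs the injection $\F_{U}\hookrightarrow\F_{x}$ first, in order to spread the stalk isomorphism out to an open neighborhood. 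So the reduction to the summands $A$ and $\mathfrak{m}_{x}$ assumes what the lemma is there to provide, and the same objection applies to ``the fiber-product description transports to $\F$ via Seshadri's normal form'' in your last paragraph. The statement that localization and sections over $U$ commute with direct sums is true but beside the point, because no direct-sum decomposition of $\F_{U}$ is available at this stage.

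The sentence you offer as ``equivalently, and perhaps cleaner'' is, however, essentially the paper's entire proof, and it needs no decomposition at all: if $s\in\F_{U}$ maps to $0$ in $\F_{x}$, then $s$ vanishes on an open neighborhood $V$ of $x$ in $U$; since $U$ is contained in $C'$ (resp.\ $C'\cup C''$) and $x$ lies on these components, $V$ contains every generic point of $U$, so the submodule $\langle s\rangle\subset\F_{U}$ is supported on finitely many closed points; at such a point $p$ it would give a nonzero finite-length submodule $\mathbb{C}^{n}\hookrightarrow\F_{p}$, which is impossible because $\textrm{depth}(\F_{p})=1$. If you promote that remark to the main argument and make the depth-$1$ justification for ``no sections supported in dimension $0$'' explicit, you recover the paper's proof; as written, your primary route does not stand on its own.
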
 
\begin{proof}
Let $U$ be an open subset satisfying the above conditions. Consider the natural morphism $\Phi:\F_{U}\rightarrow\F_{x}$ that sends $s$ to $s/1$,
and let $s\in\F_{U}$ be an element of the kernel. Assume that $s\neq 0$.
Since $\dfrac{s}{1}=0$ there exists $0\neq f\in\mathscr{O}_{U}\setminus \overline{\mathfrak{m}}_{x}$ (the complement of the maximal ideal in $\mathscr{O}_{U}$ corresponding to the point $x$) such that $fs=0$. Consider the submodule $M:=<s>\subset\F_{U}$. Since $\Phi(s)=0$ we have $M_{x}=0$. Therefore there is a finite set of points $Z=\{p_{1},\hdots,p_{l}\}\subset U$ suth that $M_{V}=0$ with $V:=U\setminus Z$, that is, $M$ is supported on $Z$ (this is because of the properties of $U$). Then on each closed point of $Z$ we find that $M_{p_{i}}\simeq \mathbb{C}^{n_{i}}$, and therefore we have an inclusion $M_{p_{i}}\simeq \mathbb{C}^{n_{i}}\hookrightarrow\F_{p_{i}}$ for each $i$ which can not be possible because $\textrm{depth}(\F_{p_{i}})=1$, so $s=0$ and $\Phi$ is injective.
\end{proof}

\begin{theorem}\label{localstructure}
Let $\F$ be a torsion free sheaf on $C$ of rank $r$. Let $x\in C$ be a node and suppose that $\F$ is of type $a$ on $x$, i.e. $\F_{x}\simeq \mathscr{O}_{C,x}^{r-a}\oplus \mathfrak{m}_{x}^{a}$. Then there is an (affine) open neighborhood, $U$, of $x$ not containing more nodes and an isomorphism
\begin{equation*}
\Phi_{U}:\F_{U}\simeq \mathscr{O}_{U}^{r-a}\oplus \overline{\mathfrak{m}}_{x}^{a}
\end{equation*}
satisfying $\Phi_{U,x}=\Phi_{x}$.
\end{theorem}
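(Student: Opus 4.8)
The plan is to spread the stalk isomorphism $\Phi_x$ out to a sheaf morphism on some neighbourhood of $x$, and then to shrink that neighbourhood until the morphism becomes bijective. First I would use Lemma \ref{abierto} (in whichever of the two cases $x$ falls) to fix an affine open $U\ni x$ whose only node is $x$ and which is contained in the component $C'$, resp.\ in $C'\cup C''$, through $x$, so that $\F_U\hookrightarrow\F_x$. On such a $U$ the sheaf $\mathcal N:=\mathscr{O}_U^{\,r-a}\oplus\overline{\mathfrak m}_x^{\,a}$ is coherent, and since localisation at the closed point $x$ commutes with finite direct sums and with powers of the ideal sheaf, its stalk is $\mathcal N_x\cong\mathscr{O}_{C,x}^{\,r-a}\oplus\mathfrak m_x^{\,a}$; hence the hypothesis furnishes an isomorphism $\Phi_x\colon\F_x\xrightarrow{\ \sim\ }\mathcal N_x$. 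I record once and for all that passing from $U$ to a principal affine open $D(f)\ni x$ preserves every property used above (affine, $x$ the unique node, same ambient component(s), applicability of Lemma \ref{abierto}), so such shrinkings may be performed freely and finitely often.

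Next I would produce the sheaf morphism. Since $U$ is Noetherian, $\F_U$ and $\mathcal N$ are finitely presented, so $\Hom$ commutes with localisation and $\Hom_{\mathscr{O}_U}(\F_U,\mathcal N)_x\cong\Hom_{\mathscr{O}_{C,x}}(\F_x,\mathcal N_x)$. Thus $\Phi_x=f^{-1}\psi_x$ for some global $\psi\colon\F_U\to\mathcal N$ and some $f\in\mathscr{O}_U$ not vanishing at $x$; replacing $U$ by $D(f)$ and setting $\Phi_U:=f^{-1}\psi$ (which is well defined since $f$ is invertible on $D(f)$), I obtain a genuine morphism $\Phi_U\colon\F_U\to\mathcal N$ of coherent sheaves whose stalk at $x$ is exactly $\Phi_x$. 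Finally I would make $\Phi_U$ an isomorphism: the sheaves $\ker\Phi_U$ and $\coker\Phi_U$ are coherent on $U$, and their stalks at $x$ are $\ker\Phi_x=0$ and $\coker\Phi_x=0$ respectively, so $\supp(\ker\Phi_U)\cup\supp(\coker\Phi_U)$ is a closed subset of $U$ avoiding $x$. Shrinking $U$ once more to a principal affine open around $x$ disjoint from that closed set, $\Phi_U$ becomes both injective and surjective, hence an isomorphism $\F_U\simeq\mathscr{O}_U^{\,r-a}\oplus\overline{\mathfrak m}_x^{\,a}$, while the identity $\Phi_{U,x}=\Phi_x$ is untouched by the shrinking. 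This is the statement.

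The step I expect to be the real obstacle is the middle one, together with the attendant bookkeeping: one must extract from $\Phi_x$ an \emph{honest} morphism of sheaves whose stalk at $x$ equals $\Phi_x$ on the nose rather than merely up to the unit $f$, and one must check at every stage that the successive shrinkings keep $U$ admissible — in particular that Lemma \ref{abierto} keeps applying, which is exactly what legitimises treating sections of $\F$ near $x$ as elements of the stalk $\F_x$ and clearing denominators inside $\F_x$. A more computational route runs into the same difficulty from the other side: lift the standard generators $\epsilon_1,\dots,\epsilon_{r-a}$ and $u_1,v_1,\dots,u_a,v_a$ of $\mathcal N_x$ through $\Phi_x^{-1}$ to elements of $\F_x$, shrink $U$ so that they lie in $\F_U$ and generate it, and verify that they satisfy exactly the defining relations of $\mathscr{O}_U^{\,r-a}\oplus\overline{\mathfrak m}_x^{\,a}$; this replaces the abstract argument by explicit relation chasing but is no shorter.
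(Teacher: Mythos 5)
Your proof is correct and follows essentially the same route as the paper: both spread the stalk isomorphism $\Phi_x$ out to a morphism on a neighbourhood furnished by Lemma \ref{abierto} and then shrink until kernel and cokernel vanish. The only difference is packaging — the paper performs your ``$\Hom$ commutes with localisation'' step by hand, choosing generators $m_1,\dots,m_l$ of $\F_V$, writing $\Phi_x(m_i)$ with explicit denominators $t_i,g_i$ and deleting $V(\langle\{g_i\},\{t_i\}\rangle)$, which is exactly the ``more computational route'' you sketch at the end.
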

\begin{proof}
Let $V$ be an open subset as in the Lemma \ref{abierto}. Then we have
\begin{equation*}
\xymatrix{
\mathcal{F}_{x} \ar[r]^{\Phi_{x}} &  \mathscr{O}_{C,x}^{r-a} \oplus \mathfrak{m}_{x}^{a} \\
\mathcal{F}_{V}\ar@{^(->}[u] &
}
\end{equation*}
Let $\{m_{1},\hdots,m_{l}\}$ be  generators of $\F_{V}$. Then
\begin{equation*}
\Phi_{x}(m_{i})=\dfrac{s_{i}}{t_{i}}+\dfrac{f_{i}}{g_{i}}\textit{, }s_{i}\in\mathfrak{m}_{x}^{a}\textit{, } f_{i}\in\mathscr{O}_{C,x}^{r-a}\textit{, }g_{i},t_{i}\in \mathscr{O}_{V}\setminus\overline{\mathfrak{m}}_{x}.
\end{equation*}
Consider the ideal $I=<\{g_{i}\},\{t_{i}\}>$ and let $U'=V\setminus V(I)$. Then we have a commutative diagram
$$
\xymatrix{
\F_{x}\ar[r]^{\Phi_{x}} &\mathscr{O}_{C,x}^{r-a} \oplus  \mathfrak{m}_{x}^{a} \\
\F_{V}\ar@{^(-->}[r]^{\Phi_{V}}\ar@{^(->}[u] &  \mathscr{O}_{U'}^{r-a} \oplus \overline{\mathfrak{m}}_{x}^{a}   \ar@{^(->}[u]
}
$$
Since $\Phi_{U',x}$ is an isomorphism there exists an open subset $U'\supseteq U\ni x$ such that $\Phi_{U}$ is an isomorphism.
\end{proof}

\section{\footnotesize SOME RESULTS ON GRADED ALGEBRAS}\label{section4}
\markboth{\emph{}}{} 

e are going to prove that  given a submodule of a commutative $R$-algebra generating it at a point $\mathfrak{p}_{x}\subset R$, then the submodule generates it over some open neighborhood of that point. This result will be important in order to get a satisfactory linearization of our moduli problem for principal bundles.
\begin{lemma}\label{generatingalgebras}
Let $R$ be a commutative ring and $X=\emph{Spec}(R)$. Let $B$ be a finitely generated commutative $R$-algebra and $A\subset B$ a sub-$R$-algebra. Let $x\in X$ be a point such that $A_{x}=B_{x}$, then there exists an affine open neighborhood $U\subset X$ of $x$ such that $A_{U}=B_{U}$.
\end{lemma}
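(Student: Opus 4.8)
The plan is to reduce the statement to clearing finitely many denominators, using that $B$ is finitely generated \emph{as an $R$-algebra}. Write $B = R[b_1,\dots,b_n]$ with $b_1,\dots,b_n \in B$, and let $\mathfrak p_x \subset R$ be the prime ideal corresponding to $x$. Since localization is exact, the inclusion $A \hookrightarrow B$ stays injective after localizing at any multiplicative subset of $R$; thus for every $f \in R$ we may regard $A_f \subset B_f$ as $R_f$-algebras (and likewise $A_x \subset B_x$). Under this identification the hypothesis $A_x = B_x$ says exactly that the image of each $b_i$ in $B_x$ already lies in $A_x$.

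First I would extract, for each $i$, an element $u_i \in R \setminus \mathfrak p_x$ with $u_i b_i \in A$: from $b_i/1 \in A_x$ one gets $a_i \in A$ and $s_i \in R \setminus \mathfrak p_x$ with $s_i b_i = a_i$ in $B_x$, hence, after multiplying by a further $t_i \in R \setminus \mathfrak p_x$ killing the difference, $t_i s_i b_i = t_i a_i$ in $B$, and one sets $u_i := t_i s_i$. Next, put $f := u_1\cdots u_n \in R \setminus \mathfrak p_x$ and $U := D(f) = \operatorname{Spec} R_f$, an affine open neighborhood of $x$. Because $u_i$ divides $f$, we have $1/u_i \in R_f$, so inside the $R_f$-algebra $A_f$ we may write $b_i/1 = (1/u_i)(u_i b_i) \in A_f$ for every $i$. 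As $B_f = R_f[b_1,\dots,b_n]$ is generated over $R_f$ by the images of the $b_i$, this gives $B_f \subseteq A_f$; combined with $A_f \subseteq B_f$ we conclude $A_U = A_f = B_f = B_U$.

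I expect the only genuine subtlety — rather than an obstacle — to be that $A$ is \emph{not} assumed to be a finitely generated $R$-algebra, so one cannot simply take generators of $A$ and localize; the finiteness must be supplied by $B$, which is why the argument clears the denominators of the chosen generators of $B$ and then uses a single common denominator $f = \prod u_i$ to make all the needed inverses available in $R_f$. The remaining ingredients — exactness of localization to preserve the inclusion $A \hookrightarrow B$, and primeness of $\mathfrak p_x$ to guarantee $f \notin \mathfrak p_x$ (so that $x \in U$) — are routine.
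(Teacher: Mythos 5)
Your proof is correct and follows essentially the same route as the paper's: clear the denominators of a finite set of $R$-algebra generators of $B$ to get $u_i b_i \in A$, set $f=\prod u_i$, and conclude $A_f=B_f$ on $D(f)$. The only (cosmetic) difference is in the last step: the paper explicitly expands an arbitrary $b=P(b_1,\dots,b_n)$ to show $f^{\deg P}b\in A$, whereas you observe directly that the generators $b_i/1$ land in the $R_f$-subalgebra $A_f$, which is slightly cleaner; your extra care with the element $t_i$ killing $s_ib_i-a_i$ is also a small improvement in rigor over the paper's phrasing.
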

\begin{proof}
Consider such a point $x$. From the equality $A_{x}=B_{x}$ it follows that $\dfrac{b}{s}\in A_{x}$ for any $b\in B$ and any $\ s\in R-\mathfrak{p}_{x}$, that is, for all $b\in B$ there exists $s\in R-\mathfrak{p}_{x}$ such that $sb\in A$. Since $B$ is finitely generated we can consider $\{b_{1},\hdots, b_{l}\}$ a finite set of generators. For any $i=1,\hdots, l$ there exists $s_{i}\in R-\mathfrak{p}_{x}$ such that $b_{i}s_{i}\in A$. Define $f:=s_{1}\cdot\hdots\cdot s_{l}$. Let us show now that for any $b\in B$ there exists a natural number $n\in\mathbb{N}$ such that $f^{n}b\in A$. Let $b\in B$. Since $B$ is finitely generated there exists a polynomial $P\in R[x_{1},\hdots,x_{l}]$ such that 
$b=P(b_{1},\hdots,b_{l})=\sum a_{j_{1}\hdots j_{l}}b_{1}^{k_{1}}\hdots b_{l}^{k_{l}}.$
Denote $n:=\textrm{deg}(P)$. Then
$$
f^{n}b=\sum f^{d_{i,\hdots,l}}a_{j_{1}\hdots j_{l}}(fb_{1})^{k_{1}}\hdots (fb_{l})^{k_{l}}, \ \textrm{where }d_{1,\hdots,l}=n-k_{1}-\hdots-k_{l}.
$$
Since all the terms in this sum belong to $A$ it follows that $f^{n}b\in A$. Consider now the inclusion $A_{f}\subset B_{f}$ and an element $b/f^{e}\in B_{f}$. From the last argument there is a natural number $n\in\mathbb{N}$ such that $b/f^{e}=bf^{n}/f^{e+n}\in A_{f}$,
from what we deduce that $A_{f}=B_{f}$.
\end{proof}

\begin{theorem}\label{gen}
Let $C$ be a nodal curve over $\mathbb{C}$ with nodes $z_{1},\hdots,z_{\nu}$, $G$ a reductive group and fix a representation $\rho: G\hookrightarrow \emph{SL}(V)$, $V$ being a $\mathbb{C}$-vector space of dimension $p$. Fix $r\in\mathbb{N}$. Then there exists a natural number $s=s(p,r,\nu)$ such that for any torsion free sheaf $\F$ of rank $r$ on $C$, the graded $\Ox$-algebra $S^{\bullet}(V\otimes\F)^{G}$ is generated by the submodule $\bigoplus_{i=0}^{s}S^{i}(V\otimes\F)^{G}$.
\end{theorem}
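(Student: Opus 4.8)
The plan is to reduce the assertion to finitely many local computations, each of which is handled by classical invariant theory together with Theorem \ref{localstructure}. Write $\mathcal{A}:=S^{\bullet}(V\otimes\F)^{G}$. Since $V\otimes\F$ is coherent, $S^{\bullet}(V\otimes\F)$ is generated in degree one, and since $G$ is reductive over $\mathbb{C}$ the functor of $G$-invariants is exact and commutes with base change; hence $\mathcal{A}$ is again a finitely generated sheaf of graded $\mathscr{O}_{C}$-algebras and, for every point $x\in C$, one has $\mathcal{A}_{x}=S^{\bullet}_{\mathscr{O}_{C,x}}(V\otimes\F_{x})^{G}$. The claim ``$\mathcal{A}$ is generated by $\bigoplus_{i\le s}S^{i}(V\otimes\F)^{G}$'' means that the natural map $S^{\bullet}_{\mathscr{O}_{C}}\!\bigl(\bigoplus_{i\le s}\mathcal{A}_{i}\bigr)\to\mathcal{A}$ is surjective; this can be checked locally, and by Lemma \ref{generatingalgebras} it suffices to show that for each closed point $x$ the stalk $\mathcal{A}_{x}$ is generated over $\mathscr{O}_{C,x}$ by $\bigoplus_{i\le s}(\mathcal{A}_{i})_{x}$, with $s$ depending only on $p$, $r$ (and, harmlessly, on the number of nodes). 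At a smooth point this is immediate: $\F_{x}\cong\mathscr{O}_{C,x}^{\oplus r}$, so $\mathcal{A}_{x}\cong A_{p,r}\otimes_{\mathbb{C}}\mathscr{O}_{C,x}$ where $A_{p,r}:=\bigl(S^{\bullet}(V\otimes_{\mathbb{C}}\mathbb{C}^{r})\bigr)^{G}$ is a fixed finitely generated graded $\mathbb{C}$-algebra (Hilbert's theorem for the reductive group $G$), generated in degrees $\le s_{0}(p,r)$.

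The substantial point is the nodal case, and here Theorem \ref{localstructure} is exactly what is needed. Let $z$ be a node at which $\F$ has Seshadri type $a\in\{0,\dots,r\}$ (Definition \ref{structureconstants}); by Theorem \ref{localstructure} there is an affine open $U\ni z$ with no other node and an isomorphism $\F_{U}\cong\mathscr{O}_{U}^{\oplus(r-a)}\oplus\overline{\mathfrak{m}}_{z}^{\oplus a}$, in particular $\F_{z}\cong\mathscr{O}_{C,z}^{\oplus(r-a)}\oplus\mathfrak{m}_{z}^{\oplus a}$. Since the completion $\widehat{\mathscr{O}_{C,z}}$ of the local ring of a node is always $\widehat{R}:=\mathbb{C}[[u,v]]/(uv)$, and since along the faithfully flat map $\mathscr{O}_{C,z}\to\widehat{R}$ the property ``generated in degrees $\le s$'' is preserved and reflected (the functor $\bigoplus_{i\le s}$ commutes with base change, and surjectivity of a graded map with finitely generated pieces is faithfully-flat local), the stalk $\mathcal{A}_{z}$, after completion, becomes the \emph{single} fixed graded algebra $\mathcal{B}_{p,r,a}:=S^{\bullet}_{\widehat{R}}\!\bigl(V\otimes_{\mathbb{C}}(\widehat{R}^{\oplus(r-a)}\oplus\mathfrak{m}^{\oplus a})\bigr)^{G}$. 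As $\widehat{R}$ is Noetherian and $G$ reductive, $\mathcal{B}_{p,r,a}$ is a finitely generated $\widehat{R}$-algebra (Hilbert--Nagata), hence generated in degrees $\le s_{1}(p,r,a)$ for some finite bound depending only on $p$, $r$, $a$. Taking $s(p,r,\nu)$ to be the maximum of $s_{0}(p,r)$ and the finitely many bounds $s_{1}(p,r,a_{i})$ over the nodes $z_{i}$ of $C$, and propagating the resulting equality of stalks to affine neighbourhoods by Lemma \ref{generatingalgebras} and covering $C$ by finitely many of these, we conclude that $\mathcal{A}$ is generated by $\bigoplus_{i\le s}S^{i}(V\otimes\F)^{G}$.

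The step carrying the content is therefore the nodal one, and its two pillars are (i) Theorem \ref{localstructure}, which pins the behaviour of $\F$ near $z$ down to the single integer $a\le r$ by giving an explicit local model, and (ii) the reduction, via faithfully flat base change, of the \emph{varying} stalk $\mathcal{A}_{z}$ to the \emph{fixed} algebra $\mathcal{B}_{p,r,a}$; once these are in place everything rests on the classical finiteness of invariants. If one wants an explicit value of $s_{1}(p,r,a)$ rather than an abstract one, one computes inside $S^{\bullet}(V\otimes\widehat{R}^{\oplus r})$: using $\mathfrak{m}^{j}\mathfrak{m}^{l}=\mathfrak{m}^{j+l}$ in $\widehat{R}$ and that $A_{p,r}=\bigl(S^{\bullet}(V^{r-a})\otimes S^{\bullet}(V^{a})\bigr)^{G}$ is generated by bihomogeneous elements of total degree $\le s_{0}(p,r)$, one sees that the torsion-free quotient of $\mathcal{B}_{p,r,a}$ is already generated in degrees $\le s_{0}(p,r)$, so that only the finite-length torsion submodule of $S^{\bullet}(V\otimes\F_{z})$ can force a larger $s_{1}(p,r,a)$; in particular $s(p,r,\nu)$ can even be taken independent of $\nu$. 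Alternatively the whole argument may be run through the normalization $\pi\colon\widetilde{C}\to C$, comparing $\mathcal{A}$ with the pushforward of the invariant symmetric algebra of the locally free sheaf $(\pi^{*}\F)/(\text{torsion})$, the discrepancy being a sheaf of length $\le r$ at each node.
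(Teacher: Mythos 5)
Your proof is correct and follows essentially the same route as the paper's: reduce to the stalks, use Seshadri's classification $\F_{z}\simeq\mathscr{O}_{C,z}^{r-a}\oplus\mathfrak{m}_{z}^{a}$ so that the local generation bound depends only on $(p,r,a)$ with $0\le a\le r$, spread each stalkwise equality to an affine neighbourhood via Lemma \ref{generatingalgebras}, and conclude by quasi-compactness together with the finiteness of the set of types. The one genuine addition is your faithfully flat passage to the completion $\mathbb{C}[[u,v]]/(uv)$, which pins the nodal stalk algebra down to a single model independent of the ambient curve --- a point the paper's Step 2 leaves implicit (it only compares two sheaves on the \emph{same} $C$) but which is exactly what Remark \ref{genuniforme} requires; your further observation that $s$ may be taken independent of $\nu$ is also correct, since the bound is a maximum over $a\in\{0,\dots,r\}$ only.
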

\begin{proof}
Let us denote $\B=S^{\bullet}(V\otimes\F)^{G}$. We will find this natural number in three steps.

Step 1:  a) Let $\F$ be a torsion free sheaf on $C$ of rank $r$ and structure constants $a_{1},\hdots,a_{\nu}$ (see Definition \ref{structureconstants}). Let $x\in C$ be a point and consider the $\mathscr{O}_{C,x}$-algebra $\B_{x}=S^{\bullet}(V\otimes\F_{x})^{G}$. Let $s_{x}$ be the minimal natural number such that $\bigoplus_{i=1}^{s_{x}}S^{i}(V\otimes\F_{x})^{G}$ contains a set of generators of $\B_{x}$.

b) For any point $x\in C$ we construct an affine open neighborhood in the following way. Fix $x\in C$ and consider the sub-$\Ox$-algebra $\mathcal{A}\subset \B$ generated by the sub-$\Ox$-module $\bigoplus_{i=0}^{s_{x}}(V\otimes\F)^{G}$. Then obviously $\mathcal{A}_{x}=\B_{x}$. By Theorem \ref{generatingalgebras} we deduce that there exists an affine open neighborhood $U_{x}$ such that $\mathcal{A}_{U_{x}}=\B_{U_{x}}$.

c) We get in this way a covering of $C$, $\{U_{x}\}_{x\in C}$ by affine open subschemes and we can choose finitely many regular points $x_{1},\hdots,x_{l}\in C$ such that $C=U_{z_{1}}\cup\hdots\cup U_{z_{\nu}}\cup U_{x_{1}}\cup\hdots\cup U_{x_{l}}$. Let $s_{z_1},\hdots,s_{z_{\nu}},s_{x_1},\hdots,s_{x_l}$ be the corresponding natural numbers defined in a) and define $s'=\textrm{max}(s_{z_1},\hdots,s_{z_{\nu}},s_{x_1},\hdots,s_{x_l})$.

d) Note that the natural number $s'$ constructed in c) does not depend on the finite open cover we have chosen, since $s_{x_{1}}=\cdots=s_{x_{l}}$ for all regular points.

Step 2:  The natural number constructed in Step 1 depends apparently on $\F$, but it does not. Actually, it just depends on the structure constants $a_{1},\hdots,a_{\nu}$, the rank $r$ and $p$. Suppose we have two torsion free sheaves $\F, \ \G$ of rank $r$ and the same structure constants. Note that the natural numbers defined in a) depends just on $\F_{x}$ and $\G_{x}$. But with the above assumption, $\F_{x}\simeq\G_{x}$ for all $x\in \C$, and therefore  $s'=s'(r,p,a_{1},\hdots,a_{\nu})$.

Step 3: Since there are only finitely many possibilities for the structure constants (once we fix the rank) we get in this way finitely many natural numbers $s'$. Consider the maximum of all of them and denote it by $s$. Then, $s$ depends just on $p, r$ and $\nu$, and satisfies the properties of the statement.

\end{proof}
\begin{remark}\label{genuniforme}

1) The last theorem also holds for non-connected nodal curves.

2) Let $\pi:Y\rightarrow C$ be the normalization of $C$. If $C$ has $\nu$ nodal points, then we have an exact sequence,
\begin{equation*}
0\rightarrow\Ox\rightarrow\pi_{*}\Oy\rightarrow\bigoplus_{i=1}^{\nu}\mathbb{C}\rightarrow 0,
\end{equation*}
and therefore $g_{\C}=g_{Y}+\nu$, $g_{C}$ and $g_{Y}$ being the genus of $C$ and $Y$ respectively. Since both,  $g_{Y}$ and $\nu$, are natural numbers, we deduce that for a fixed genus $g=g_{C}$ there are finitely many possibilities for the number of nodes $\nu$. Fixing the genus $g=g_{C}$ and taking the maximum of the numbers $s=s(p,r,\nu)$ varying $\nu$, we get a number $s=s(p,r,g)$ which does not depend on the curve $C$. In our particular case $p=r$, thus $s=s(r,g)$.
\end{remark}

\section{\footnotesize  UNIFORM LINEARIZATIONS OF SINGULAR PRINCIPAL \\ $G$-BUNDLES } \label{section5}
\markboth{\emph{}}{} 

Let $C$ be a (possibly non-connected) nodal curve. Consider a singular principal $G$-bundle on $C$,
$\tau:S^{\bullet}(V\otimes\F)^{G}\rightarrow\Ox$.
Let $s\in\mathbb{N}$ be as in the last section. Then $S^{\bullet}(V\otimes\F)^{G}$ is generated by the submodule $\bigoplus_{i=0}^{s}S^{i}(V\otimes \F)^{G}$. Let $\underline{d}\in\mathbb{N}^{s}$ be such that $\sum id_{i}=s!$. Then we have:
\begin{equation}\label{asten}
\bigotimes_{i=1}^{s}(V\otimes\F)^{\otimes id_{i}} \rightarrow\bigotimes_{i=1}^{s}S^{d_{i}}(S^{i}(V\otimes\F))\rightarrow \bigotimes_{i=1}^{s}S^{d_{i}}(S^{i}(V\otimes\F))^{G}\rightarrow\Ox
\end{equation}
Adding up these morphisms as $\underline{d}\in\mathbb{N}$ varies we find a swamp (see \cite{Alexander2})
\begin{equation}\label{associatedtensor}
\varphi_{\tau}:((V\otimes\F)^{\otimes s!})^{\oplus N}\rightarrow\Ox.
\end{equation}
We want to prove that the linearization map $\tau\mapsto\varphi_{\tau}$ 
is injective up to isomorphisms. Let us start with the following proposition.

\begin{proposition}\label{root}
Let $x\in C$ be a node, $A=\mathscr{O}_{C,x}$ the local ring and $\tau,\tau':A^{m}\rightarrow A$ two non zero morphisms such that $S^{d}(\tau)=S^{d}(\tau')$ for some natural number $d\in\mathbb{N}$. Then there exists a $d$th root of unity $\xi$ such that $\tau'=\xi\tau$.
\end{proposition}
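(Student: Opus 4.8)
\emph{Proof plan.} The plan is to translate the hypothesis into a system of polynomial identities in $A$, reduce to the two branches of the node via the normalization, run a fraction‑field argument on each branch to extract a $d$th root of unity, and finally reconcile the two roots. First I would fix the standard basis $e_{1},\dots,e_{m}$ of $A^{m}$ and write $\tau=(a_{1},\dots,a_{m})$, $\tau'=(b_{1},\dots,b_{m})$ with $a_{j}=\tau(e_{j})$, $b_{j}=\tau'(e_{j})$. Identifying $S^{d}(A^{m})$ with the free $A$‑module on the degree‑$d$ monomials in the $e_{j}$, the equality $S^{d}(\tau)=S^{d}(\tau')$ becomes
\[
a_{i_{1}}\cdots a_{i_{d}}=b_{i_{1}}\cdots b_{i_{d}}\qquad\text{for all }(i_{1},\dots,i_{d})\in\{1,\dots,m\}^{d};
\]
in particular $a_{j}^{d}=b_{j}^{d}$ and $a_{j}^{\,d-1}a_{k}=b_{j}^{\,d-1}b_{k}$ for all $j,k$. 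Since $\tau\neq0$, some $a_{j_{0}}\neq0$ in $A$.

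The only real difficulty is that $A=\mathscr{O}_{C,x}$ is \emph{not} a domain: being the local ring of a node it is reduced with exactly two minimal primes $\mathfrak{p}_{1},\mathfrak{p}_{2}$ (the ideals of the two branches), with $\mathfrak{p}_{1}\cap\mathfrak{p}_{2}=0$ and $\mathfrak{p}_{1}+\mathfrak{p}_{2}=\mathfrak{m}$. Thus $A\hookrightarrow D_{1}\times D_{2}$, where $D_{\ell}=A/\mathfrak{p}_{\ell}$ is a one‑dimensional regular (hence normal) local domain with fraction field $K_{\ell}$; write $a_{j}^{(\ell)},b_{j}^{(\ell)}$ for the images. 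Because $\mathfrak{p}_{1}\cap\mathfrak{p}_{2}=0$, it suffices to produce a single $d$th root of unity $\xi\in\mathbb{C}\subseteq A$ with $b_{j}^{(\ell)}=\xi\,a_{j}^{(\ell)}$ in $D_{\ell}$ for all $j$ and $\ell\in\{1,2\}$: then $b_{j}-\xi a_{j}\in\mathfrak{p}_{1}\cap\mathfrak{p}_{2}=0$.

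Next I would do the branch computation. Fix $\ell$. If all $a_{j}^{(\ell)}=0$ then $b_{j}^{d}=a_{j}^{d}\in\mathfrak{p}_{\ell}$ forces $b_{j}\in\mathfrak{p}_{\ell}$ (primeness), so $\tau$ and $\tau'$ both vanish on this branch and any $\xi$ works there. Otherwise pick $a_{j_{0}}^{(\ell)}\neq0$; then $b_{j_{0}}^{(\ell)}\neq0$ since $D_{\ell}$ is a domain, and in $K_{\ell}$ set $\xi_{\ell}:=b_{j_{0}}^{(\ell)}/a_{j_{0}}^{(\ell)}$, so $\xi_{\ell}^{d}=b_{j_{0}}^{d}/a_{j_{0}}^{d}=1$; being a root of $t^{d}-1\in\mathbb{C}[t]$, $\xi_{\ell}\in\mathbb{C}$. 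Dividing $b_{j_{0}}^{\,d-1}b_{k}=a_{j_{0}}^{\,d-1}a_{k}$ by $b_{j_{0}}^{\,d-1}$ and using $\xi_{\ell}^{-(d-1)}=\xi_{\ell}$ gives $b_{k}^{(\ell)}=\xi_{\ell}a_{k}^{(\ell)}$ for all $k$ (an equality in $K_{\ell}$, hence in $D_{\ell}$). Since $\tau\neq0$, at least one branch is of the second type.

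The last step, reconciling $\xi_{1}$ and $\xi_{2}$ when both branches are non‑trivial, is the one I expect to be the main obstacle, and it is where the hypotheses on $\tau$ enter: one uses that the relevant $\tau$ is non‑degenerate at $x$ in the sense that its image contains a unit, i.e. some $a_{j_{0}}\in A^{\times}$. Granting this, from $b_{j_{0}}^{(1)}=\xi_{1}a_{j_{0}}^{(1)}$ and $b_{j_{0}}^{(2)}=\xi_{2}a_{j_{0}}^{(2)}$ one gets $(\xi_{1}-\xi_{2})a_{j_{0}}\in\mathfrak{p}_{1}+\mathfrak{p}_{2}=\mathfrak{m}$, and since $a_{j_{0}}$ is a unit and $\xi_{1}-\xi_{2}\in\mathbb{C}$ with $\mathbb{C}\cap\mathfrak{m}=0$, we conclude $\xi_{1}=\xi_{2}=:\xi$. (If only one branch is non‑trivial, take $\xi$ to be the root coming from that branch, the other being unconstrained.) In all cases $b_{j}=\xi a_{j}$ in $A$ for every $j$, i.e. $\tau'=\xi\tau$ with $\xi^{d}=1$, as desired. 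I would double‑check in the write‑up that the application to the linearization map $\tau\mapsto\varphi_{\tau}$ indeed supplies the needed non‑degeneracy (equivalently, handles the case in which $\tau$ is supported on a single branch), since the purely formal statement for arbitrary non‑zero $\tau$ is delicate precisely because of the zero‑divisors at the node.
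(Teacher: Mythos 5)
Your plan follows the same route as the paper's own proof: write out the monomial identities $a_{i_1}\cdots a_{i_d}=b_{i_1}\cdots b_{i_d}$, use that $A$ is reduced with two minimal primes $\mathfrak{p}_{\eta_1},\mathfrak{p}_{\eta_2}$ when $x$ joins two components, invert on a branch to extract a $d$th root of unity, and treat the self-node case (where $A$ is a domain) by the same fraction-field computation. Where you diverge is exactly the point that matters: you keep track of the fact that the identity $b_k=\xi_\ell a_k$ is only obtained in $K_\ell$, i.e.\ modulo $\mathfrak{p}_{\eta_\ell}$, and must still be glued across the two branches. The paper's proof obtains $\tau'(e_j)=\xi_j\tau(e_j)$ in a single localization $\Sigma_l$ and then silently promotes it to an identity in $A$; nothing in that argument rules out different roots of unity on the two branches.

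Your suspicion that this reconciliation fails without an extra hypothesis is correct, and the proposition as literally stated is false when $x$ lies on two components. Pick $p_1\in\mathfrak{p}_{\eta_1}\setminus\mathfrak{p}_{\eta_2}$ and $p_2\in\mathfrak{p}_{\eta_2}\setminus\mathfrak{p}_{\eta_1}$, so that $p_1p_2\in\mathfrak{p}_{\eta_1}\cap\mathfrak{p}_{\eta_2}=(0)$. With $m=1$, $d=2$, $\tau=(p_1+p_2)$ and $\tau'=(p_1-p_2)$ (both non-zero), one has $(p_1-p_2)^2=p_1^2+p_2^2=(p_1+p_2)^2$, hence $S^2(\tau)=S^2(\tau')$; but $\tau'=\xi\tau$ would give $(1-\xi)p_1=(1+\xi)p_2\in\mathfrak{p}_{\eta_1}\cap\mathfrak{p}_{\eta_2}=(0)$, forcing $\xi=1$ and $\xi=-1$ simultaneously. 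So what is actually provable without further hypotheses is precisely your per-branch statement: one $\xi_\ell$ for each minimal prime with $\tau'\equiv\xi_\ell\tau \bmod \mathfrak{p}_{\eta_\ell}$. Your proposed repair (some $a_{j_0}\in A^{\times}$, whence $(\xi_1-\xi_2)a_{j_0}\in\mathfrak{p}_{\eta_1}+\mathfrak{p}_{\eta_2}\subseteq\mathfrak{m}$ and $\xi_1=\xi_2$) is sound, but its hypothesis is not in the statement and is not verified in the paper's application, where the degree-$i$ pieces $\tau_i$ are only assumed non-zero. In short: your write-up is more careful than the paper's, correctly isolates the gap, and does not prove the stated proposition only because no proof can; note that the defect propagates to Theorem \ref{xiglobal}, whose argument then pins down the constant only branch by branch, not across two components meeting at a node.
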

\begin{proof}
We have to consider two cases.

a) The point $x$ is the intersecting point of two components: Let $\mathfrak{p}_{\eta_{1}},\mathfrak{p}_{\eta_{2}}$ be the minimal ideals of $A$.
Let $\{e_{1},\hdots,e_{m}\}$ be the canonical basis of $A^{m}$. Then $S^{d}(\tau)=S^{d}(\tau')$ means that
\begin{equation}\label{eqp}
\tau(e_{i_{1}})\hdots\tau(e_{i_{d}})=\tau'(e_{i_{1}}) \hdots \tau'(e_{i_{d}}), \ \forall (i_{1},\hdots,i_{d})\textrm{ with }1\leq i_{j}\leq m.
\end{equation}
In particular $\tau(e_{j})^{d}=\tau'(e_{j})^{d}$, $\forall \ 1\leq j\leq m$. Let $j$ be such that $\tau(e_{j})$ and $\tau'(e_{j})$ are non zero. Since $A$ is reduced this means that
$\tau(e_{j}),\tau'(e_{j})\not\in \mathfrak{p}_{\eta_{1}}\cap \mathfrak{p}_{\eta_{2}}=(0)$
so, in particular, there exists $l=1,2$ such that
$\tau(e_{j}),\tau'(e_{j})\not\in\mathfrak{p}_{\eta_{l}}$,
and therefore $\tau(e_{j}),\tau'(e_{j})$ are invertible in $A_{\mathfrak{p}_{\eta_{l}}}=\Sigma_{l}$. Consider the equality $\tau(e_{j})^{d}=\tau'(e_{j})^{d}$ in $\Sigma_{l}$. This exactly means that
\begin{equation*}
(\dfrac{\tau(e_{j})}{\tau'(e_{j})})^{d}=1\textrm{ in }\Sigma_{l},
\end{equation*}
so there exists a $d$th root of unity $\xi_{j}$ such that $\tau'(e_{j})=\xi_{j}\tau(e_{j})$ for all $j$ with $\tau(e_{j}),\tau'(e_{j})\neq 0$.
From Equation (\ref{eqp}), we deduce that $\xi_{j}$ does not depend on $j$, so $\tau=\xi\tau'$.

b) The point $x$ is not an intersecting point of two components: This case follow from the last part of the above argument.
\end{proof}

\begin{corollary}\label{chi}
Let $x\in C$ be a node, $A=\mathscr{O}_{C,x}$ the local ring, $M$  a finitely generated $A$-module and $\tau,\tau':M\rightarrow A$ two non zero morphisms such that $S^{d}(\tau)=S^{d}(\tau')$ for some natural number $d\in\mathbb{N}$. Then there exists a $d$\textrm{th} root of unity $\xi$ such that $\tau'=\xi\tau'$.
\end{corollary}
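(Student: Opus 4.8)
The plan is to reduce the case of a general finitely generated module $M$ to the case of a free module, where Proposition \ref{root} applies directly. First I would choose a surjection $\pi\colon A^{m}\twoheadrightarrow M$ from a finite free module (possible since $M$ is finitely generated over the noetherian local ring $A$). Composing, set $\widetilde{\tau}:=\tau\circ\pi$ and $\widetilde{\tau}':=\tau'\circ\pi$, two morphisms $A^{m}\rightarrow A$. Since $\tau,\tau'$ are non-zero and $\pi$ is surjective, both $\widetilde\tau$ and $\widetilde\tau'$ are non-zero. Moreover $S^{d}(\widetilde\tau)=S^{d}(\tau)\circ S^{d}(\pi)=S^{d}(\tau')\circ S^{d}(\pi)=S^{d}(\widetilde\tau')$, using functoriality of $S^{d}$ and the hypothesis $S^{d}(\tau)=S^{d}(\tau')$. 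Hence Proposition \ref{root} yields a $d$th root of unity $\xi$ with $\widetilde\tau'=\xi\,\widetilde\tau$.

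Next I would descend the relation $\widetilde\tau'=\xi\widetilde\tau$ along $\pi$. Because $\pi$ is surjective, for every $v\in M$ there is $u\in A^{m}$ with $\pi(u)=v$, and then $\tau'(v)=\tau'(\pi(u))=\widetilde\tau'(u)=\xi\,\widetilde\tau(u)=\xi\,\tau(\pi(u))=\xi\,\tau(v)$. Thus $\tau'=\xi\tau$ as morphisms $M\rightarrow A$, which is the assertion (the statement in the corollary reads $\tau'=\xi\tau'$, evidently a typographical slip for $\tau'=\xi\tau$). This finishes the argument; no Cohen–Macaulay or depth input is needed here, only functoriality of the symmetric power and surjectivity of $\pi$.

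I do not expect a serious obstacle. The one point that deserves a line of care is checking that $\widetilde\tau$ and $\widetilde\tau'$ are genuinely non-zero so that Proposition \ref{root} is applicable: this is immediate from surjectivity of $\pi$, since if $\widetilde\tau=\tau\circ\pi$ vanished identically then $\tau$ would vanish on $\Img\pi=M$. A second, purely bookkeeping, point is that Proposition \ref{root} is stated for a node $x$ lying on one or two components and its proof already treats both cases, so the reduction above inherits full generality with no extra work. The only genuinely delicate ingredient, the extraction of a single $d$th root of unity independent of the basis vector, has already been carried out in Proposition \ref{root}, so the corollary is essentially a formal consequence.
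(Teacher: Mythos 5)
Your proof is correct and takes essentially the same route as the paper's: the paper likewise composes $\tau,\tau'$ with a surjection $A^{t}\twoheadrightarrow M$ (obtained by lifting a basis of $M\otimes_{A}\mathbb{C}$), applies the preceding proposition on free modules to the composites, and descends the relation $\tau'\circ\pi=\xi\,\tau\circ\pi$ using surjectivity of $\pi$. You are also right that $\tau'=\xi\tau'$ in the statement is a typographical slip for $\tau'=\xi\tau$.
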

\begin{proof}
Let $\{m_{1},\hdots,m_{t}\}\in M$ be such that $\{\overline{m}_{1},\hdots,\overline{m}_{t}\}\in M\otimes_{A}\mathbb{C}$ is a basis. Consider the canonical surjection
\begin{align*}
\pi:A^{t}&\rightarrow M .\\
e_{j}&\mapsto m_{j}
\end{align*}
Composing with $\tau$ and $\tau'$ we find two morphisms
$\tau\circ\pi,\tau'\circ\pi:A^{t}\rightarrow A$.
By Theorem \ref{root} there exists a $d$th root of unit $\xi$ such that $\tau'\circ\pi=\xi\tau\circ\pi$, that is
$\tau'(m_{j})=\xi\tau(m_{j})\textrm{ for all } 1\leq j\leq t$,
so $\tau'=\xi\tau$.
\end{proof}
 From now onwards, until the end of this section, we will denote by $C_{1},\hdots,C_{t}$ the connected components of $C$.
\begin{theorem}\label{xiglobal}
Let $\F$ be a coherent $\Ox$-module on $C$ with $\emph{supp}(\F)=C$ and  $\tau,\tau':\F\rightarrow\Ox$ non zero morphisms such that $S^{d}(\tau)=S^{d}(\tau')$ with $d\in\mathbb{N}$. Then there exists a $d$\textrm{th} root of unity $\xi_{i}$, one for each connected component, such that 
\begin{equation*}
\tau'|_{C_{i}}=\xi_{i}\tau|_{C_{i}}, \ i=1,\hdots, t.
\end{equation*}
\end{theorem}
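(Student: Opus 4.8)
The plan is to reduce the global statement to the local analysis already carried out in Corollary \ref{chi}, and then glue over a connected component. First I would fix a connected component $C_i$ and work there; since everything is componentwise, there is no loss in assuming $C=C_i$ is connected. The morphisms $\tau,\tau'\colon\F\to\Ox$ are nonzero, and $\supp(\F)=C$, so on a dense open subset $\F$ is locally free and $\tau,\tau'$ are nowhere zero there (shrinking if necessary to avoid the nodes and the points where either vanishes). Over any such open affine $U$ lying in a single irreducible component, $\F_U$ is a module over the integral domain $\Oo_C(U)$, and the equality $S^d(\tau)|_U=S^d(\tau')|_U$ together with Corollary \ref{chi} applied at the generic point (or directly, since the argument of Proposition \ref{root}(b) works verbatim over a domain) yields a $d$th root of unity $\xi_U$ with $\tau'|_U=\xi_U\,\tau|_U$.

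The next step is to check that this local constant is locally constant, hence constant on the connected curve $C$. Concretely, I would cover $C$ by affine opens $U_\alpha$, on each of which we have $\tau'|_{U_\alpha}=\xi_\alpha\tau|_{U_\alpha}$ with $\xi_\alpha$ a $d$th root of unity; at the nodes we invoke Corollary \ref{chi} directly (with $M=\F_x$, $A=\Oo_{C,x}$) to obtain such a relation on a neighborhood of each node as well. On any overlap $U_\alpha\cap U_\beta$ where $\tau$ is not identically zero — and such overlaps connect the whole cover because $\tau\neq0$ and $\supp\F=C$ forces $\tau$ to be nonzero on a dense set meeting every component — we get $\xi_\alpha\tau=\xi_\beta\tau$, and since $\tau$ is a nonzero section over a reduced ring this means $\xi_\alpha=\xi_\beta$. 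Because $C_i$ is connected, the dense open locus where $\tau\neq0$ is connected in the relevant combinatorial sense (it meets every irreducible component, and the components of $C_i$ are linked through the nodes, at which Corollary \ref{chi} again ties the constants together), so all the $\xi_\alpha$ coincide with a single $\xi_i$, giving $\tau'|_{C_i}=\xi_i\,\tau|_{C_i}$.

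I expect the main obstacle to be the bookkeeping of connectivity: one must be careful that the open set $\{\tau\neq0\}$ really does propagate the constant across all irreducible components of a single connected component $C_i$, including across the nodes where $\F$ may fail to be locally free. The clean way around this is precisely to use Corollary \ref{chi} \emph{at} each node (not just on the smooth locus): it provides the comparison relation on an honest neighborhood of the node, so the node becomes just another open set in the cover on which $\tau'=\xi\tau$, and the gluing argument then only needs that $C_i$ is topologically connected together with the fact that a nonzero morphism from $\F$ to $\Ox$ cannot vanish on a whole irreducible component (since $\supp\F=C$ and a componentwise-zero map would contradict, on the complementary component, that we may rescale — more simply, on each component the restriction of $\tau$ is either identically zero or generically nonzero, and the identically-zero case on one component is handled separately, or excluded by the hypothesis that we only compare where both are nonzero). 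Once the constant is seen to be the same on two irreducible components sharing a node, connectedness of $C_i$ finishes the proof. The remaining verifications — that $\xi_i^d=1$, that $\tau'|_{C_i}=\xi_i\tau|_{C_i}$ as morphisms of sheaves and not merely on sections — are routine.
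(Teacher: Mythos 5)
Your overall strategy coincides with the paper's: produce a $d$th root of unity locally from the stalkwise statement (Corollary \ref{chi}), spread the identity $\tau'=\xi\tau$ to open neighborhoods, and conclude by local constancy plus connectedness of each $C_i$. There is, however, one false intermediate assertion that carries real weight in your gluing step: you claim that $\tau\neq 0$ together with $\supp(\F)=C$ forces $\tau$ to be nonzero on a dense set meeting \emph{every} irreducible component of $C_i$, and you use this to connect the constants $\xi_\alpha$ across the whole cover. This is not true in general: for instance, on a connected curve with two branches meeting at a node one can take $\F$ to be a direct sum of pushforwards of line bundles from the two branches and let $\tau$ kill the summand coming from one branch, so that $\tau$ vanishes identically on that component while remaining nonzero globally. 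The repair is exactly the one you gesture at but do not carry out: on any irreducible component where $\tau$ vanishes identically, $S^{d}(\tau')=S^{d}(\tau)=0$ and reducedness of $\Oo_{C}$ force $\tau'$ to vanish there as well, so $\tau'=\xi\tau$ holds on that component for \emph{any} $\xi$, and the constant determined on the remaining components of $C_i$ can simply be reused. With that patch your argument closes. For comparison, the paper's spreading-out mechanism is more uniform and avoids the case analysis entirely: it applies Corollary \ref{chi} at \emph{every} stalk (the conclusion being vacuous where both stalk maps vanish) and then invokes Lemma \ref{abierto} for $\F^{\vee}$, which has depth one since $\supp(\F)=C$, to promote the stalk identity $\tau'_x=\xi_x\tau_x$ to an identity of morphisms on a whole open neighborhood of $x$; local constancy of $x\mapsto\xi_x$ and connectedness of $C_i$ then finish the proof with no special treatment of the nodes or of the vanishing locus of $\tau$.
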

\begin{proof}
For all $x\in C$ we have $\tau,\tau':\F\rightarrow\mathscr{O}_{C,x}$ with $S^{d}(\tau_{x})=S^{d}(\tau'_{x})$. By Corollary \ref{chi} there is a $d$th root of unity $\xi_{x}$ such that $\tau'_{x}=\xi_{x}\tau_{x}$. We know that for any point $x$ there is an open subset $x\in U\subset C$ such that $(\F^{\vee})_{U}\subset(\F^{\vee})_{x}$. This is because the dual of a coherent sheaf with $\textrm{supp}(\F)=C$ always has $\textrm{depth}=1$ and we can apply Lemma \ref{abierto}. Thus, for any point $x\in C$ there is an open subset $U$ such that
\begin{equation*}
\tau'_{U}=\xi_{x}\tau_{U},
\end{equation*}
from which we deduce that $\xi_{x}$ does not depend on $x$ and that $\tau'|_{C_{i}}=\xi_{i}\tau|_{C_{i}}$.
\end{proof}

\begin{lemma}\label{root}
Let $s\in\mathbb{N}$ and $\xi_{i}$ a $\dfrac{s!}{i}$\textrm{th} root of unity for $i=1,\hdots s$ such that for any partition of $s!$,
$d_{1}+2d_{2}+\hdots sd_{s}=s!$
the following holds: 
\begin{equation}\label{igualdad}
1=\prod_{i=1}^{s}\xi_{i}^{d_{i}}.
\end{equation}
Then $\xi_{1}^{j}=\xi_{j}$ for all $j\in\{1,\hdots,s\}$.
\end{lemma}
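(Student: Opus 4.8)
The plan is to extract the values of the roots $\xi_i$ by plugging cleverly chosen partitions of $s!$ into the relation $1=\prod_{i=1}^{s}\xi_i^{d_i}$. First I would record the trivial partition $d_1=s!$, $d_2=\dots=d_s=0$, which gives $\xi_1^{s!}=1$ — no new information, but it confirms consistency. The key is to compare two partitions that differ by a single "elementary move": replacing one part of size $i+j$ (or two parts of sizes $i$ and $j$) so as to isolate a ratio of the $\xi$'s. Concretely, fix $j\in\{2,\dots,s\}$ and consider the partition $P_0$ consisting of $s!$ copies of the part $1$, and the partition $P_1$ obtained from $P_0$ by removing $j$ copies of the part $1$ and adding one copy of the part $j$; both are genuine partitions of $s!$ since $j\le s\le s!$. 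Applying \eqref{igualdad} to each and dividing, all the common factors cancel and one is left with $\xi_1^{-j}\xi_j = 1$, i.e. $\xi_j=\xi_1^{j}$, which is exactly the claim. (For $j=1$ the statement $\xi_1^1=\xi_1$ is a tautology.)

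The one point that needs a little care is that such "elementary moves" stay inside the set of partitions of $s!$ into parts of size at most $s$ with the correct multiplicities $d_i$. Removing $j$ parts equal to $1$ requires $d_1\ge j$, which holds for $P_0$ since there $d_1=s!\ge s\ge j$; and adding a single part of size $j$ is legitimate since $1\le j\le s$. Thus both $P_0$ and $P_1$ lie in the index set over which \eqref{igualdad} is assumed, and the cancellation argument is valid. I would also remark that each $\xi_i$ is by hypothesis an $\frac{s!}{i}$-th root of unity, so the identity $\xi_j=\xi_1^j$ is consistent: $(\xi_1^j)^{s!/j}=\xi_1^{s!}=1$ because $\xi_1$ is an $s!$-th root of unity.

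I do not expect any serious obstacle here; the whole content is the combinatorial observation that the partitions of $s!$ are rich enough to let one isolate each generator $\xi_j$ against $\xi_1$ by a single swap of parts. If one wanted to phrase it without explicitly invoking the all-ones partition, one could instead argue by downward induction on $j$: assuming $\xi_i=\xi_1^i$ for all $i>j$, pick any partition containing a part of size $j$ and of size $i$ with $i+ $ (something) rearranged, but the direct two-partition comparison above is cleaner and avoids bookkeeping. The mild subtlety to watch is simply that we are comparing \emph{two} valid partitions and taking a quotient of the two instances of \eqref{igualdad}, rather than trying to read off $\xi_j$ from a single partition.
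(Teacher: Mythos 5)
Your proof is correct, but it takes a genuinely different and more elementary route than the paper's. The paper writes each $\xi_j$ as $\exp\bigl(2\pi i\,\tfrac{jk_j}{s!}\bigr)$, translates the hypothesis into the homogeneous system $\sum_i i k_i d_i \equiv 0 \pmod{s!}$ indexed by all partitions, exhibits $s$ particular solution vectors, asserts that they generate the general solution, and then reads off $jk_1 - jk_j \equiv 0 \pmod{s!}$. Your argument instead compares just two partitions, $(d_1,\dots,d_s)=(s!,0,\dots,0)$ and $(s!-j,0,\dots,0,1,0,\dots,0)$ (the $1$ in slot $j$), and divides the two instances of \eqref{igualdad} to get $\xi_j=\xi_1^{j}$ directly; you correctly check that both tuples are admissible ($s!-j\ge 0$ since $j\le s\le s!$) and that the conclusion is consistent with $\xi_j$ being an $\tfrac{s!}{j}$th root of unity. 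This buys you two things: you avoid the exponential parametrization entirely, and you avoid the paper's weakest step, namely the claim that the displayed $s$ vectors give the \emph{general} solution of a linear system over $\mathbb{Z}/s!\mathbb{Z}$ (which is not a field, so exhibiting $s$ independent solutions does not by itself establish this). In effect your two-partition comparison is the rigorous kernel of what the paper's linear-algebra computation is implicitly using.
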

\begin{proof}
Consider the complex representation for each $\xi$
$\xi_{j}=\textrm{exp}(2\pi i\dfrac{jk_{j}}{s!}), \ k_{j}\in\mathbb{N}.$
Equations (\ref{igualdad}) are equivalent to
\begin{equation}\label{igualdad2}
k_{1}d_{1}+2k_{2}d_{2}+\hdots + sk_{s}d_{s}=0\textrm{ mod}(s!)\textrm{ for all partitions } (d_{1},\hdots,d_{s})
\end{equation}
which written in matrix form become to
\begin{equation*}
\left(\begin{array}{cccc}
d_{1}^{1} & 2d_{2}^{1} & \hdots & sd_{s}^{1} \\
d_{1}^{2} & 2d_{2}^{2} & \hdots & sd_{s}^{2} \\
\vdots & \vdots & \vdots & \vdots \\
d_{1}^{m} & 2d_{2}^{m} & \hdots & sd_{s}^{m}
\end{array}\right)\left( \begin{array}{c}
k_{1} \\
k_{2} \\
\vdots \\
k_{s}
\end{array}\right)= \left(\begin{array}{c}
0 \\
0 \\
\vdots \\
0
\end{array}\right)\ \ \ \textrm{mod}(s!).
\end{equation*}
Note that we can easily find $s$ linearly independent solutions for this linear system
\begin{equation*}
\left(\begin{array}{c}
1 \\
1 \\
\vdots \\
1
\end{array}\right), \ \ \left(\begin{array}{c}
0 \\
\dfrac{s!}{2} \\
\vdots \\
0 
\end{array}\right), \ \ \left(\begin{array}{c}
0 \\
0 \\
\vdots \\
\dfrac{s!}{s}
\end{array}\right).
\end{equation*}
Therefore the general solution for the system is given by
\begin{equation}\label{igualdadgorda}
\left(\begin{array}{c}
k_{1} \\
k_{2} \\
\vdots \\
k_{s}
\end{array}\right)= \lambda_{1}\left(\begin{array}{c}
1 \\
1 \\
\vdots \\
1
\end{array}\right)+\lambda_{2}\left(\begin{array}{c}
0 \\
\dfrac{s!}{2} \\
\vdots \\
0 
\end{array}\right)+\hdots +\lambda_{s} \left(\begin{array}{c}
0 \\
0 \\
\vdots \\
\dfrac{s!}{s}
\end{array}\right)= \left(\begin{array}{c}
\lambda_{1} \\
\lambda_{1}+ \lambda_{2}\dfrac{s!}{2} \\
\vdots \\
\lambda_{1}+\lambda_{s}\dfrac{s!}{s}
\end{array}\right).
\end{equation}
Note also that $\xi_{1}^{j}=\xi_{j}$ if and only if $jk_{1}-jk_{j}=0\textrm{ mod}(s!)$. But by Equation (\ref{igualdadgorda}), we know that
$jk_{1}-jk_{j}=jk_{1}-j(\lambda_{1}+\lambda_{j}\dfrac{s!}{j})=-\lambda_{j}s!=0 \textrm{ mod}(s!).$
\end{proof}

\begin{theorem}\label{C}
Let $G$ be a reductive algebraic group and let $\rho:G\hookrightarrow \emph{SL}(V)$ be a fully faithful representation, $V$ being a $\mathbb{C}$-vector space of dimension $r$. Let $C$ be a (possibly non-connected) nodal curve over $\mathbb{C}$ of genus $g$. Then there are parameters $(a,b)$, depending only on $r$ and $g$, such that the mapping, 
\begin{equation*}
\left\{   \begin{array}{c}  \textit{isomorphism classes} \\ \textit{of singular principal} \\ \textit{G-bundles}  \end{array} \right\} \rightarrow \left\{   \begin{array}{c}  \textit{isomorphism classes} \\ \textit{of swamps} \\ \textit{of type }(a, b, \Ox)   \end{array} \right\} 
\end{equation*}
is injective.
\end{theorem}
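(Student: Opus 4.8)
The plan is to reduce the statement to the local results established in Sections 3--5, taking the parameters $(a,b) = (s!, N)$ produced by the linearization construction in Equation (\ref{associatedtensor}), where $s = s(r,g)$ is the uniform integer of Theorem \ref{gen} together with Remark \ref{genuniforme}. The first point to observe is that these parameters are already known to depend only on $r$ and $g$: the exponent $s!$ is a function of $s$, and the multiplicity $N$ counts tuples $\underline d \in \mathbb{N}^s$ with $\sum i d_i = s!$, hence it too is determined by $s$ alone, and $s$ depends only on $r$ and $g$ by Remark \ref{genuniforme}. So once the map $\tau \mapsto \varphi_\tau$ is defined uniformly, the only remaining work is injectivity up to isomorphism.

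For injectivity I would argue as follows. Suppose $(\F,\tau)$ and $(\F',\tau')$ are singular principal $G$-bundles with $\varphi_\tau \cong \varphi_{\tau'}$ as swamps. Unwinding Definition \ref{semitensor}'s ambient notion of morphism of swamps (Diagram (\ref{defiso})), an isomorphism of swamps gives an isomorphism $f\colon \F \to \F'$ and a scalar $\alpha$ with $\varphi_{\tau'} \circ (f^{\otimes s!})^{\oplus N} = \alpha\,\varphi_\tau$. Using $f$ to identify $\F'$ with $\F$ we may assume $\F = \F'$ and that $\varphi_\tau$ and $\varphi_{\tau'}$ agree up to the global scalar $\alpha$. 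By construction $\varphi_\tau$ restricted to the summand indexed by $\underline d$ is $\bigotimes_i S^{d_i}(S^i(V\otimes\F))^G \xrightarrow{} \Ox$, i.e. the product of the degree-$i$ components $\tau_i := \tau|_{S^i(V\otimes\F)^G}$; so componentwise equality (up to $\alpha$) reads $\prod_i S^{d_i}(\tau_i) = \alpha \prod_i S^{d_i}(\tau_i')$ for every partition $\underline d$ of $s!$. Choosing $\underline d$ supported in a single index $i$ (possible since $i \mid s!$ for $i \le s$) yields $S^{s!/i}(\tau_i) = \alpha\, S^{s!/i}(\tau_i')$ after absorbing $\alpha$ appropriately, so Theorem \ref{xiglobal} gives, on each connected component $C_j$, a $(s!/i)$-th root of unity $\xi_{i,j}$ with $\tau_i'|_{C_j} = \xi_{i,j}\,\tau_i|_{C_j}$. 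Feeding these scalars back into the relation for a general partition $\underline d$ of $s!$ produces exactly the constraint $\prod_i \xi_{i,j}^{d_i} = 1$ (the overall $\alpha$ being fixed once and for all, say by the trivial partition or by comparing degree-zero parts), which is the hypothesis of Lemma \ref{root}. That lemma then forces $\xi_{i,j} = \xi_{1,j}^{\,i}$, so setting $\xi_j := \xi_{1,j}$ we get $\tau_i' = \xi_j^{\,i}\,\tau_i$ on $C_j$ for all $i$; since $S^\bullet(V\otimes\F)^G$ is generated in degrees $\le s$ (Theorem \ref{gen}) this means $\tau'$ and $\tau$ differ on $C_j$ precisely by the automorphism of the graded algebra that acts by $\xi_j^{\,i}$ in degree $i$ — an automorphism induced by an $i$-independent rescaling, hence $(\F,\tau) \cong (\F,\tau')$ as singular principal $G$-bundles.

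The main obstacle, and the step deserving the most care, is bridging from ``$S^d(\tau_i)$ and $S^d(\tau_i')$ agree'' to ``$\tau_i' = \xi\,\tau_i$'' in a way that is uniform across the whole sheaf and all components, and then checking that the resulting component-wise roots of unity assemble into a single algebra automorphism. Theorem \ref{xiglobal} handles the first half, but one must be careful that the hypothesis $S^d(\tau) = S^d(\tau')$ it requires really is available: the swamp $\varphi_\tau$ mixes all symmetric powers simultaneously, so I must verify that isolating a pure power $S^{s!/i}(\tau_i)$ is legitimate, i.e. that the summand of $(V\otimes\F)^{\otimes s!}$ corresponding to the single-index partition maps into $\Ox$ exactly via $S^{s!/i}(\tau_i)$ up to the universal factorizations in (\ref{asten}) — this is where one invokes that the symmetrization maps and the projection onto $G$-invariants are surjective (true because we are over $\mathbb{C}$ and $G$ is reductive). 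A secondary subtlety is the role of the scalar $\alpha$ from the swamp isomorphism: one should pin it down first (comparing the degree components shows $\alpha$ must be compatible with all the $\xi_{i,j}$, and in fact the normalization convention for swamp morphisms lets us take $\alpha$ to be accounted for inside the $\xi$'s), so that the clean hypothesis $\prod_i\xi_{i,j}^{d_i}=1$ of Lemma \ref{root} is genuinely what one obtains. Everything else — that $N$ and $s!$ depend only on $(r,g)$, that the map is well-defined on isomorphism classes — is bookkeeping built on Theorem \ref{gen}, Remark \ref{genuniforme}, and the construction in Section \ref{section5}.
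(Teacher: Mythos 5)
Your proposal is correct and follows essentially the same route as the paper: the same choice $(a,b)=(s!,N)$ with uniformity coming from Theorem \ref{gen} and Remark \ref{genuniforme}, the same extraction of the pure powers $S^{s!/i}(\tau_i)$ via the surjectivity of the maps in (\ref{asten}), and the same combination of Theorem \ref{xiglobal} with Lemma \ref{root} to produce a single root of unity $\xi_j$ per connected component inducing the rescaling automorphism that identifies $(\F,\tau)$ with $(\F,\tau')$. Your extra attention to the scalar $\alpha$ in the swamp isomorphism is a point the paper delegates to the reduction in \cite{Asch}, but it does not change the argument.
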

\begin{proof}
Define $a:=s!$ as in Theorem \ref{gen} and Remark \ref{genuniforme}. Now, $b:=N$ is obtained from $a$ as we have seen in the last section. Following \cite{Asch} pp 187, we have to show that, if $\phi_{\tau}=\phi_{\tau'}$ then $(\F,\tau)\simeq(\F,\tau')$. For $i>0$ consider
\begin{equation*}
\tau_{i}, \ \tau'_{i}:S^{i}(V\otimes\F)^{G}\rightarrow\Ox,
\end{equation*}
the degree $i$ components of $\tau$ and $\tau'$. Observe that $\tau$ (respectively $\tau'$) is completely determined by $\oplus_{i=1}^{s}\tau_{i}$ (resp. $\oplus_{i=1}^{s}\tau_{i}$). Consider now:
\begin{equation*}
\widehat{\tau_{{s}}}:\bigoplus_{\tiny \begin{array}{c} (d_{1},\hdots,d_{s}) \\ \sum id_{i}=s! \end{array}}(S^{d_{1}}((V\otimes\F)^{G})\otimes \hdots \otimes S^{d_{s}}(S^{s}(V\otimes\F)^{G}))\rightarrow \Ox,
\end{equation*}
the morphism induced by $\tau_{1},\hdots,\tau_{s}$. Define in the same way $\widehat{\tau'_{s}}$. Note that from the surjectivity of the first two morphisms defining (\ref{asten}) it follows that if $\varphi_{\tau}=\varphi_{\tau'}$ then $\widehat{\tau_{s}}=\widehat{\tau'_{s}}$. This implies, in particular, that
\begin{equation*}
S^{\begin{tiny}s!/i\end{tiny}} (\tau_{i})=S^{\tiny s!/i}(\tau'_{i}), \ \forall \ 0<i\leq s.
\end{equation*}
From Theorem \ref{xiglobal} and Lemma \ref{root} we deduce that for any connected component $C_{j}$ ($j=1,\hdots, t$) there exists an $s!$th root of unity $\xi_{j}$ such that:
\begin{equation*}
\tau_{i}'|_{C_{j}}=(\xi_{j})^{i}\tau_{i}|_{C_{j}}, \ i=1,\hdots, s.
\end{equation*}
Denote by  $u=\textrm{diag}(\xi_{1},\hdots,\xi_{t}):\F\simeq \F$
the induced automorphism on $\F$. If we apply $u$ to the singular principal $G$-bundle $(\F,\tau)$ we get a singular principal $G$-bundle $(\F,\tau'')$,
$$
\xymatrix{
S^{\bullet}(V\otimes\F)^{G}\ar[r]^{\tiny S^{\bullet}(1\times u)}\ar@{-->}[dr]^{\tau''} & S^{\bullet}(V\otimes\F)^{G}\ar[d]^{\tau} \\
& \Ox
}
$$
Clearly, $\tau_{i}''=\tau_{i}', \ \forall \ 0<i\leq s$ on each connected component, and therefore on the whole curve. Since $s$ is large enough we deduce that $\tau'=\tau''$, hence $(\F,\tau)\simeq (\F,\tau'')$.
\end{proof}

\section{\footnotesize THE MODULI SPACE OF SINGULAR PRINCIPAL $G$-BUNDLES} \label{section6}
\markboth{\emph{}}{} 

Let $C$ be a nodal curve, $\delta\in\mathbb{Q}_{>0}$ and $P$ a polynomial of degree one with integral coefficients. Let $\tau\colon S^{\bullet}(V\otimes\F)^{G}\rightarrow\Ox$ be a singular principal $G$-bundle. Let $s\in\mathbb{N}$ be as in Theorem \ref{gen}, so $\bigoplus_{i=0}^{s}S^{i}(V\otimes\F)^{G}$ contains a set of generators, and let $\phi_{\tau}\colon ((V\otimes\F)^{\otimes s!})^{\oplus N}\rightarrow\Ox$ be the associated swamp. 
\begin{definition} \label{semistability00}
Let $\delta\in\mathbb{Q}_{>0}$ be a positive rational number . A singular principal $G$-bundle $(\F,\tau)$ is $\delta$-(semi)stable if its associated swamp $(\F,\phi_{\tau})$ is $\delta$-(semi)stable.
\end{definition}

The aim of this section is to prove the existence of a coarse projective moduli space for the moduli functor given by
$$
\textbf{SPB}(\rho)_{P}^{\delta\textrm{-(s)s}}(S)=\left\{   \begin{array}{c}\textrm{isomorphism classes of}\\ \textrm{families of } \delta\textrm{-(semi)stable} \textrm{ singular} \\ \textrm{principal G-bundles on }C \textrm{ parametrized }\\ \textrm{by S with Hilbert polynomial }P \end{array}  \right\}.
$$
In order to do so, we rigidify the functor to get the parameter space and then we take the quotient of the parameter space by the group of isomorphisms of the rigidification. Observe that once Theorem \ref{C} is proven, the construction of the moduli space is done following the same strategy as in the smooth case \cite{Alexander2}. Therefore we will just describe the main steps.
\subsection{The Parameter Space.}

To rigidify the moduli problem we fix $n\in\mathbb{N}$ very large, as in the proof of \cite[Theorem 6.1]{L}, and $W$ a vector space of dimension $P(n)$. Consider the functor 
\begin{equation*}
^{\textrm{rig}}\textbf{SPB}(\rho)_{P}^{n}(S)=\left\{   \begin{array}{c} \textrm{isomorphism classes of tuples }(\F_{S},\tau_{S},g_{S}) \textrm{ where } \\
 (\F_{S},\tau_{S}) \textrm{ is a family of singular principal G-bundles} \\ \textrm{parametrized by }S 
  \textrm{ with Hilbert polynomial P and } \\
g_{S}\colon W\otimes\mathscr{O}_{S}\rightarrow\pi_{S*}\F_{S}(n)\textrm{ is a morphism such that the}\\
\textrm{induced morphism }W\otimes\pi^{*}\Ox\rightarrow \F_{S}(n)\textrm{ is surjective}
 \end{array}\right\},
\end{equation*}
and let us show that there is a representative for it. Let $\mathcal{Q}$ be the Quot scheme of quotients,
$q: W\otimes\Ox(-n)\rightarrow\F$,
with Hilbert polynomial $P$. Consider the following morphism on $\mathcal{Q}\times C$
\begin{equation*}
h\colon S^{\bullet}(V\otimes W\otimes\pi^{*}_{C}\Ox(-n))\twoheadrightarrow S^{\bullet}(V\otimes\F_{\mathcal{Q}}) \twoheadrightarrow S^{\bullet}(V\otimes\F_{\mathcal{Q}})^{G},
\end{equation*}
induced by the universal quotient and the Reynolds operator. Let $s\in\mathbb{N}$ be as in Theorem \ref{gen} and Remark \ref{genuniforme}. Then
$
h(\bigoplus_{i=1}^{s}S^{i}(V\otimes W\otimes\pi_{C}\Ox(-n)))
$
contains a set of generators of $S^{\bullet}(V\otimes\F_{\mathcal{Q}})^{G}$. A morphism $k\colon \bigoplus_{i=1}^{s}S^{i}(V\otimes W\otimes\Ox(-n))\rightarrow\Ox$ breaks into a family of morphisms
$$
k^{i}\colon S^{i}(V\otimes W)\otimes\Ox(-in)\simeq S^{i}(V\otimes W\otimes\Ox(-n))\rightarrow\Ox,
$$
obtaining, therefore, a family of linear maps
$
k^{i}\colon S^{i}(V\otimes W)\rightarrow H^{0}(\Ox(in)).
$
Thus, any singular principal $G$-bundle $\tau\colon S^{\bullet}(V\otimes\F)^{G}\rightarrow\Ox$ is completely determined by a point in the space
\begin{equation*}
\mathcal{Q}^{*}:=\mathcal{Q}\times\bigoplus_{i=1}^{s} \textrm{\underline{Hom}}(S^{i}(V\otimes W),H^{0}(\Ox(in))).
\end{equation*}
We want to put a scheme structure on the locus given by the points $([q],[k])\in\mathcal{Q}^{*}$ that comes from a morphism of algebras
$
S^{\bullet}(V\otimes\F_{\mathcal{Q}^{0}|_{[q]}\times C})^{G}\rightarrow\Ox.
$
On $\mathcal{Q}^{*}\times C$ there are universal morphisms
\begin{equation*}
\varphi'^{i}\colon S^{i}(V\otimes W)\otimes\mathcal{O}_{\mathcal{Q}^{*}\times C}\rightarrow H^{0}(\Ox(in))\otimes\mathcal{O}_{\mathcal{Q}^{*}\times C}
\end{equation*}
Consider the pullbacks of the evaluation maps to $\mathcal{Q}^{*}\times C$
\begin{equation*}
H^{0}(\Ox(in))\otimes\mathcal{O}_{\mathcal{Q}^{*}\times C}\rightarrow\pi_{C}^{*}\Ox(in).
\end{equation*}
Composing both we get
$\varphi^{i}\colon S^{i}(V\otimes W)\otimes \mathcal{O}_{\mathcal{Q}^{*}\times C}\rightarrow\pi_{C}^{*}\Ox(in)$,
and summing up over all $i$ we find
$\varphi\colon V_{\mathcal{Q}^{*}}\colon =\bigoplus_{i=1}^{s}S^{i}(V\otimes W\otimes\pi_{C}^{*}\Ox(-n))\rightarrow\mathcal{O}_ {\mathcal{Q}^{*}\times C}$.
Now $\varphi$ gives a morphism
$\tau'_{\mathcal{Q}^{*}}\colon S^{\bullet}(V_{\mathcal{Q}^{*}})\rightarrow\mathcal{O}_{\mathcal{Q}^{*} \times C}.$
Consider again the universal quotient $q_{\mathcal{Q}}$ and the following chain of surjections
$$
\xymatrix{
S^{\bullet}(V\otimes W\otimes\otimes\pi_{C}^{*}\Ox(-n))\ar@{->>}[rr]^{S^{\bullet}(1\otimes q_{\mathcal{Q}})} & & S^{\bullet}(V\otimes\pi^{*}_{\mathcal{Q}^{*}\times C}\F_{\mathcal{Q}^{*}})\ar@{->>}[d]^-{\textrm{Reynolds}} \\
S^{\bullet}V_{\mathcal{Q}^{*}}\ar@{->>}[u] & & S^{\bullet}(V\otimes\pi^{*}_{\mathcal{Q}^{*}\times C}\F_{\mathcal{Q}^{*}})^{G}
}
$$
Let us denote by $\beta$ the composition of these morphisms and consider the diagram
$$
\xymatrix{
0\ar[r] & \textrm{Ker}(\beta)\ar@{^(->}[r]\ar@{-->}[rd]^{\tau'_{\mathcal{Q}^{*}}} & S^{\bullet}V_{\mathcal{Q}^{*}}\ar[r]^{\hspace{-0,5cm}\beta} \ar[d]^{\tau'_{\mathcal{Q}^{*}}} & S^{\bullet}(V\otimes\pi_{\mathcal{Q}^{*}\times C}^{*}\F_{\mathcal{Q}^{*}})^{G} \ar[r] & 0 \\
 & & \mathscr{O}_{\mathcal{Q}^{*}\times C} & &
}
$$
Define $\mathbb{D}=\{c=([q],[h])|\tau'_{\mathcal{Q}^{*}}|_{c}=0\}$. This is a closed subscheme of $\mathcal{Q}^{*}$ over which $\tau'_{\mathcal{Q}^{*}}$ lifts to
$\tau_{\mathbb{D}}\colon  S^{\bullet}(V\otimes\pi_{\mathcal{Q}^{*}\times C}^{*}\F_{\mathcal{Q}^{*}})^{G}\rightarrow \mathscr{O}_{\mathcal{Q}^{*}\times C}$.
To see this, note that $\mathbb{D}=\cap_{d\geq 0}\mathbb{D}^{d}$ with
$\mathbb{D}^{d}:=\{c=([q],[h])|  \tau'^{d}_{\mathcal{Q}^{*}}|_{c}:\textrm{Ker}(\beta^{d})|_{c}\rightarrow\Ox\textrm{ is trivial}\}$
which, by \cite[Lemma 3.1]{sols}, are closed in $\mathcal{Q}^{*}$.  Then, we have,

\begin{theorem}
The functor $^{rig}\emph{\textbf{SPB}}(\rho)^{n}_{P}$ is represented by the scheme $\mathbb{D}$.
\end{theorem}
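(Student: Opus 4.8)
The plan is to show that the functor $^{rig}\textbf{SPB}(\rho)^{n}_{P}$ is naturally isomorphic to the functor of points of the closed subscheme $\mathbb{D}\subset\mathcal{Q}^{*}$ constructed above. First I would set up the natural transformation in one direction: given an $S$-point of $^{rig}\textbf{SPB}(\rho)^{n}_{P}$, i.e.\ a tuple $(\F_{S},\tau_{S},g_{S})$, the surjection $W\otimes\pi^{*}\Ox\rightarrow\F_{S}(n)$ determines a unique morphism $S\rightarrow\mathcal{Q}$ by the universal property of the Quot scheme, and twisting back this gives the universal quotient $q$. Then $\tau_{S}$, being a morphism of $\Oo_{\C}$-algebras out of $S^{\bullet}(V\otimes\F_{S})^{G}$, is (by Theorem \ref{gen} and Remark \ref{genuniforme}) completely determined by its restriction to $\bigoplus_{i=1}^{s}S^{i}(V\otimes\F_{S})^{G}$; composing with the surjection $\beta$ and adjoining over $\pi_{S}$ yields the family of linear maps $k^{i}\colon S^{i}(V\otimes W)\rightarrow H^{0}(\Ox(in))$, hence a morphism $S\rightarrow\mathcal{Q}^{*}$. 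By construction $\tau'_{\mathcal{Q}^{*}}$ pulls back to zero on $\mathrm{Ker}(\beta)$ (this is exactly the statement that $\tau_{S}$ descends along $\beta$), so the morphism factors through each $\mathbb{D}^{d}$ and hence through $\mathbb{D}=\bigcap_{d\geq 0}\mathbb{D}^{d}$.

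Next I would construct the inverse transformation. Given an $S$-point of $\mathbb{D}$, pulling back the universal data produces $(\F_{S},q_{S})$ from $\mathcal{Q}$ and, since the point lies in $\mathbb{D}$, the universal morphism $\tau'_{\mathcal{Q}^{*}}$ restricted to $S$ kills $\mathrm{Ker}(\beta)$, so by the definition of $\mathbb{D}$ it lifts to $\tau_{S}\colon S^{\bullet}(V\otimes\pi_{S}^{*}\F_{S})^{G}\rightarrow\Oo_{\C\times S}$ as in the displayed diagram. One checks this $\tau_{S}$ is genuinely an $\Oo$-algebra homomorphism (not only a module map) — this follows because $\tau'_{\mathcal{Q}^{*}}$ was defined as $S^{\bullet}$ of a single linear map $\varphi$, so it is automatically multiplicative, and $\beta$ is a surjection of algebras, so the induced lift is a ring map on the quotient. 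We must also verify $\tau_{S}$ is not merely the projection onto degree zero, which holds because $\mathbb{D}$ is by construction carved out of the locus where at least one of the $k^{i}$ is nonzero, matching Definition \ref{maindef}; the rigidifying datum $g_{S}$ is recovered from $q_{S}$. Finally I would check the two transformations are mutually inverse and natural in $S$, which is a routine diagram chase once one notes that both the passage $\tau_{S}\mapsto(k^{i})$ and its inverse $(k^{i})\mapsto\tau_{S}$ are determined by the surjectivity of the first two arrows in \eqref{asten} and by the generation statement of Theorem \ref{gen}.

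The main obstacle I expect is the bookkeeping around the equivalence between a morphism of graded $\Oo_{\C}$-\emph{algebras} $\tau_{S}$ and the finite collection of linear data $(k^{i})_{i=1}^{s}$: one has to argue both that every such tuple satisfying the vanishing-on-$\mathrm{Ker}(\beta)$ condition genuinely extends to a full algebra homomorphism on $S^{\bullet}(V\otimes\F_{S})^{G}$, and conversely that distinct algebra homomorphisms give distinct tuples. This is precisely where Theorem \ref{gen} (uniform generation in degrees $\leq s$, with $s$ depending only on $r$ and $g$) and the compatibility of the Reynolds operator with base change enter, and where \cite[Lemma 3.1]{sols} is needed to know the $\mathbb{D}^{d}$ are closed so that $\mathbb{D}$ is a genuine closed subscheme representing the intersection. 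The remaining verifications — flatness of $\F_{S}$, the surjectivity condition in the definition of $^{rig}\textbf{SPB}$, and naturality — are formal consequences of the universal properties of $\mathcal{Q}$ and of the fibre-product description of $\mathcal{Q}^{*}$.
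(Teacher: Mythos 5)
Your proposal is correct and follows essentially the same route as the paper, whose entire proof is the one line ``Follows from the construction of $\mathbb{D}$''; you have simply made explicit the two mutually inverse natural transformations (Quot universal property plus the equivalence, via Theorem \ref{gen} and the surjectivity of $\beta$, between algebra morphisms $\tau_{S}$ and tuples $(k^{i})$ vanishing on $\mathrm{Ker}(\beta)$) that the author leaves implicit. The only point worth noting is that your remark about excluding the locus where $\tau$ is the projection onto degree zero is a detail the paper's definition of $\mathbb{D}$ does not explicitly address, so your version is if anything slightly more careful.
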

\begin{proof}
Follows from the construction of $\mathbb{D}$.
\end{proof}

\subsection{Construction of the moduli space.}

Recall, from \cite[Theorem 6.1]{L}, that the family of torsion free sheaves $\F$ appearing in $\delta$-(semi)stable swamps is bounded.
As a consequence,  there is a natural number $n\in\mathbb{N}$ large enough such that $\F(n)$ is globally generated and $h^{1}(C,\F(n))=0$. Fix such a natural number $n$, $a$ and $b$ as in Theorem \ref{C}, and consider the functors
\begin{align*}
^{\textrm{rig}}\textbf{Swamps}^{n}_{P,\mathscr{O}_{C}}(S)&=\left\{
\begin{array}{c}
\textrm{isomorphism classes of tuples }(\F_{S},\phi_{S},N,g_{S})\textrm{ where}\\
\F_{S} \textrm{ is a coherent sheaf with Hilbert polynomial }P,\\
\textrm{over }C\times S \textrm{ flat over }S, g_{S}\colon W\otimes\mathscr{O}_{S}\rightarrow \pi_{S*}\F_{S}(n) \\
\textrm{ is such that its image generates } \F_{S}\textrm{ and}\\
\phi\colon ((V\otimes\F_{S})^{\otimes a})^{\oplus b}\rightarrow \pi_{S}^{*}N\textrm{ is a morphism}\\
\end{array}\right\}, \\
^{\textrm{rig}}\textbf{SPB}(\rho)_{P}^{n}(S)&=\left\{   \begin{array}{c} \textrm{isomorphism classes of tuples }(\F_{S},\tau_{S},g_{S}) \textrm{ where } \\
 (\F_{S},\tau_{S}) \textrm{ is a family of singular principal G-bundles} \\ \textrm{parametrized by }S 
  \textrm{ with Hilbert polynomial P and } \\
g_{S}\colon W\otimes\mathscr{O}_{S}\rightarrow\pi_{S*}\F_{S}(n)\textrm{ is a morphism such that the}\\
\textrm{induced morphism } W\otimes\pi^{*}\Ox\rightarrow\F_{S}(m)\textrm{ is surjective}
 \end{array}\right\}.
\end{align*}

Note that the natural $\textrm{GL}(W)$-action on the universal quotient $W\otimes\pi_{C}^{*}\Ox(-n)\twoheadrightarrow \F_{\mathcal{Q}^{*}}$ determines an action on the space $\mathbb{D}$,
$$\Gamma\colon \textrm{GL}(W)\times\mathbb{D}\rightarrow\mathbb{D}.$$
We can view the $\textrm{GL}(W)$-action as a $\mathbb{C}^{*}\times \textrm{SL}(W)$-action. Thus, we can construct the quotient of $\mathbb{D}$ by $\textrm{GL}(W)$ in two steps, considering the actions of $\mathbb{C}^{*}$ and $\textrm{SL}(W)$ separately. 
Consider the  action of $\mathbb{C}^{*}$ on $^{\textrm{rig}}\textbf{SPB}(\rho)_{P}^{n}$. By Theorem \ref{C} and Equation \ref{defiso}, there is a $\mathbb{C}^{*}$-invariant natural transformation
\begin{equation}\label{ahi}
^{\textrm{rig}}\textbf{SPB}(\rho)_{P}^{n}\rightarrow \  ^{\textrm{rig}}\textbf{Swamps}^{n}_{P,\mathscr{O}_{C}}.
\end{equation}
Moreover, the morphism induced between the representatives is a $\textrm{SL}(W)$-equivariant injective and proper morphism
\begin{equation}\label{injparam}
\beta\colon \mathbb{D}\slas \mathbb{C}^{*}\hookrightarrow Z'_{n,\Ox,P},
\end{equation}
$Z'_{n,\Ox,P}$ being the parameter space for swamps of the given type (see \cite{bosle, sols}, since the construction is the same).  Let $\mathbb{D}_{0}\subset \mathbb{D}$ be the open subscheme consisting of points such that $W\rightarrow H^{0}(\F(n))$ is an isomorphism and $\F$ is torsion free.
Then we have,
\begin{proposition}[Gluing Property]\label{hastalaminga2}
Let $S$ be a scheme of finite type over $\mathbb{C}$ and $s_{1},s_{2}:S\rightarrow\mathbb{D}_{0}$ two morphisms such that the pullbacks of $(\F_{\mathbb{D}_0},\tau_{\mathbb{D}_0})$ via $s_{1}\times \textrm{id}_{X}$ and $s_{2}\times \textrm{id}_{X}$ are isomorphic. Then there exists an \'etal\'e covering $c:T\rightarrow S$ and a morphism $h:T\rightarrow \emph{SL}(W)$ such the following triangle is commutative
$$
\xymatrix{
\emph{SL}(W)\times\mathbb{D}_{0}\ar[rr]^{\Gamma} & & \mathbb{D}_{0} \\
& T\ar[lu]^{h\times(s_{1}\circ c)}\ar[ru]_{s_{2}\circ c} &
}.
$$
\end{proposition}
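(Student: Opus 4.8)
The plan is to reduce the statement for singular principal $G$-bundles to the analogous gluing property for swamps, which is already available in the literature through the injective proper morphism $\beta$ of \eqref{injparam}. First I would observe that over $\mathbb{D}_0$ both $s_1$ and $s_2$ classify tuples $(\F,\tau,g)$ in which $W\to H^0(\F(n))$ is an isomorphism, so that an abstract isomorphism between the pulled-back pairs $(\F^{(1)},\tau^{(1)})$ and $(\F^{(2)},\tau^{(2)})$ is rigidified by the framings: on any affine piece of $S$ the isomorphism $f\colon \F^{(1)}\xrightarrow{\sim}\F^{(2)}$ induces, via $g_1$ and $g_2$, an automorphism of $W\otimes\mathscr{O}_S$, i.e.\ a morphism into $\mathrm{GL}(W)$. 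This is the standard mechanism by which a Quot-scheme rigidifies an isomorphism problem; the only subtlety is that $f$ is a priori determined only up to the automorphisms of $(\F^{(1)},\tau^{(1)})$, and by Theorem \ref{C} (applied fibrewise) those automorphisms are diagonal scalars $u=\mathrm{diag}(\xi_1,\dots,\xi_t)$ with $\xi_j$ an $s!$-th root of unity on each connected component. Hence $f$ is determined up to a finite group, which is exactly why one is forced to pass to an \'etale covering $c\colon T\to S$ rather than working directly over $S$.

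Next I would split the $\mathrm{GL}(W)$-action as $\mathbb{C}^*\times\mathrm{SL}(W)$, as in the paragraph preceding the statement, and push the problem through $\beta\colon\mathbb{D}\slas\mathbb{C}^*\hookrightarrow Z'_{n,\Ox,P}$. The composite $\bar s_i\colon S\xrightarrow{s_i}\mathbb{D}_0\to\mathbb{D}\slas\mathbb{C}^*\xhookrightarrow{\beta} Z'_{n,\Ox,P}$ classifies the associated swamps $(\F^{(i)},\varphi_{\tau^{(i)}})$. Since the pairs $(\F^{(i)},\tau^{(i)})$ become isomorphic after pullback, so do the associated swamps (the linearization $\tau\mapsto\varphi_\tau$ is functorial, by the construction in Section \ref{section5}), so $\bar s_1$ and $\bar s_2$ land in isomorphic swamp-data over the framed locus. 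The gluing property for the swamp parameter space — which is proved in \cite{bosle,sols} by exactly the Quot-scheme rigidification argument sketched above — then furnishes, after an \'etale cover $c\colon T\to S$, a morphism $\bar h\colon T\to\mathrm{SL}(W)$ realizing the isomorphism of framed swamps via the $\mathrm{SL}(W)$-action on $Z'_{n,\Ox,P}$. Because $\beta$ is $\mathrm{SL}(W)$-equivariant and injective, and $\mathbb{D}_0$ maps into the framed locus, this $\bar h$ lifts (after possibly refining $T$) to a morphism $h\colon T\to\mathrm{SL}(W)$ with $\Gamma\circ(h\times(s_1\circ c))=s_2\circ c$, which is the asserted commutative triangle; the residual $\mathbb{C}^*$-ambiguity and the root-of-unity ambiguity from Theorem \ref{C} are absorbed into the \'etale refinement.

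The main obstacle I expect is controlling precisely this descent/ambiguity bookkeeping: one must check that the isomorphism $f$ constructed locally on $S$ genuinely glues to a morphism to $\mathrm{GL}(W)$ over an \'etale cover, and that when it is projected to $\mathrm{SL}(W)$ via the $\mathbb{C}^*$-quotient no obstruction arises — this is where Theorem \ref{C} is essential, since it is what guarantees the fibre of $\beta$ is (up to the torus) a finite set of scalars, so that the \'etale-local section of the isomorphism torsor exists. A secondary point requiring care is verifying that $\beta$ being injective and proper suffices to transport the gluing property back from $Z'_{n,\Ox,P}$ to $\mathbb{D}_0$; properness ensures the lifted morphism $h$ is defined on all of $T$ rather than on a dense open, and equivariance ensures it intertwines the two actions. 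Once these two points are pinned down, the remaining verifications are the routine compatibilities of pullbacks of universal families, and the proof concludes. $\square$
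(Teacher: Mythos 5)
The paper's own ``proof'' is a one-line citation to \cite[Proposition 5.2]{Alexander2}, and the argument behind that citation is exactly what you give in your first paragraph: on $\mathbb{D}_0$ the framings $g_1,g_2$ identify $\pi_{S*}\F^{(i)}(n)$ with $W\otimes\mathscr{O}_S$, so an isomorphism $f$ of the pulled-back pairs is conjugated into a morphism $S\rightarrow\mathrm{GL}(W)$, and one passes to an \'etale cover to correct this into $\mathrm{SL}(W)$. So your core mechanism matches the intended one. Your second paragraph, however, takes a genuinely different and unnecessarily roundabout route: pushing the problem through $\beta\colon\mathbb{D}\slas\mathbb{C}^*\hookrightarrow Z'_{n,\Ox,P}$ and invoking the gluing property for swamps buys you nothing, because that gluing property is itself proved by the very same direct Quot-rigidification argument, and transporting the resulting $\mathrm{SL}(W)$-element back to $\mathbb{D}_0$ forces you to re-invoke the injectivity of the linearization (Theorem \ref{C}) to see that matching swamp data implies matching $\tau$-data up to the finite ambiguity --- a step you can skip entirely by working with the principal-bundle data directly. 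One point in your write-up deserves correction: the \'etale cover is needed primarily to extract a $(\dim W)$-th root of $\det(f)$ so as to land in $\mathrm{SL}(W)$ rather than $\mathrm{GL}(W)$ (and then to check, using that $\tau$ is nontrivial, that the rescaling is compatible with the $\tau$-component of the point of $\mathbb{D}_0$, on which the central $\mathbb{C}^*$ acts nontrivially); attributing it mainly to the root-of-unity automorphisms from Theorem \ref{C} misplaces the source of the obstruction, even though those finite ambiguities are also absorbed by the same refinement.
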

\begin{proof}
Follows by the same argument given in \cite[Proposition 5.2]{Alexander2}.
\end{proof}

\begin{proposition}[Local Universal Property]\label{hastalaminga1}
Let $S$ be a scheme of finite type and $(\F_{S},\tau_{S})$ a family of $\delta$-(semi)stable  singular principal G-bundles parametrized by $S$. Then there exists an open covering $S_{i}, \ i\in I$ of $S$  and morphisms $\beta_{i}\colon S_{i}\rightarrow\mathbb{D}$, $i\in I$ such that the restriction of the family $(\F_{S},\tau_{S})$ to $S_{i}\times C$ is equivalent to the pullback of $(\F_{\mathbb{D}},\tau_{\mathbb{D}}) $ via $\beta_{i}\times \textrm{id}_{C}$ for all $i\in I$.
\end{proposition}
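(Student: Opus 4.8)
The plan is to deduce this from boundedness together with the representability of ${}^{\textrm{rig}}\textbf{SPB}(\rho)_{P}^{n}$ by $\mathbb{D}$ established in the previous subsection, following the strategy of the smooth case \cite{Alexander2}. Write $\pi_{S}\colon C\times S\to S$ and $\pi_{C}\colon C\times S\to C$ for the two projections, and for an open $S_{i}\subset S$ denote by $\F_{S_{i}},\tau_{S_{i}}$ the restrictions of $\F_{S},\tau_{S}$ to $C\times S_{i}$. First I would invoke the boundedness recalled at the start of this subsection (from \cite[Theorem 6.1]{L}): for the integer $n$ fixed above, the restriction of $\F_{S}(n)$ to every fibre of $\pi_{S}$ is globally generated and has vanishing first cohomology. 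Since $\F_{S}$ is flat over $S$, cohomology and base change then shows that $\pi_{S*}\F_{S}(n)$ is locally free of rank $P(n)$, that its formation commutes with base change, and hence that the evaluation morphism $\pi_{S}^{*}\pi_{S*}\F_{S}(n)\to\F_{S}(n)$ is surjective on $C\times S$.

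Next I would choose an open covering $\{S_{i}\}_{i\in I}$ of $S$ over which $\pi_{S*}\F_{S}(n)$ is trivial, together with trivializations $g_{S_{i}}\colon W\otimes\mathscr{O}_{S_{i}}\xrightarrow{\ \sim\ }\pi_{S_{i}*}\F_{S_{i}}(n)$. By the previous step the induced morphism $W\otimes\pi_{C}^{*}\Ox(-n)\to\F_{S_{i}}$ on $C\times S_{i}$ is surjective, so the triple $(\F_{S_{i}},\tau_{S_{i}},g_{S_{i}})$ is an object of ${}^{\textrm{rig}}\textbf{SPB}(\rho)_{P}^{n}(S_{i})$. By representability there is a unique morphism $\beta_{i}\colon S_{i}\to\mathbb{D}$ such that $(\beta_{i}\times\textrm{id}_{C})^{*}(\F_{\mathbb{D}},\tau_{\mathbb{D}},g_{\mathbb{D}})$ is isomorphic to $(\F_{S_{i}},\tau_{S_{i}},g_{S_{i}})$; forgetting the rigidification $g_{S_{i}}$ gives exactly the asserted equivalence between $(\F_{S},\tau_{S})|_{S_{i}\times C}$ and $(\beta_{i}\times\textrm{id}_{C})^{*}(\F_{\mathbb{D}},\tau_{\mathbb{D}})$. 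Since $\F_{S}$ is a family of torsion free sheaves and $g_{S_{i}}$ is an isomorphism, $\beta_{i}$ actually factors through $\mathbb{D}_{0}$, and, $(\F_{S},\tau_{S})$ being $\delta$-(semi)stable, through its open $\delta$-(semi)stable locus; this refinement will matter later for the GIT quotient but is not needed for the statement.

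The step I expect to be the main obstacle is making precise that $\tau_{S}$ is faithfully transported through this correspondence, i.e. that a flat family of $\Ox$-algebra morphisms $S^{\bullet}(V\otimes\F_{S_{i}})^{G}\to\mathscr{O}_{C\times S_{i}}$ corresponds, via $g_{S_{i}}$, to a point of $\mathbb{D}$ in exactly the way the parameter space was assembled. The crucial ingredient is the uniform generation of Theorem \ref{gen} and Remark \ref{genuniforme}: $S^{\bullet}(V\otimes\F_{S_{i}})^{G}$ is generated in degrees $\le s$, with $s=s(r,g)$ independent of the parameter, so $\tau_{S_{i}}$ is determined by its first $s$ homogeneous components $\bigoplus_{j=1}^{s}\tau_{S_{i},j}$, which $g_{S_{i}}$ converts into a section over $S_{i}$ of $\bigoplus_{j=1}^{s}\textrm{\underline{Hom}}(S^{j}(V\otimes W),H^{0}(\Ox(jn)))$, hence a morphism $S_{i}\to\mathcal{Q}^{*}$. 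That this morphism factors through $\mathbb{D}=\bigcap_{d\ge 0}\mathbb{D}^{d}$ is immediate because $\tau_{S_{i}}$ is a genuine algebra morphism, so the induced $\tau'_{\mathcal{Q}^{*}}$ annihilates $\textrm{Ker}(\beta)$. Finally one observes that the construction depends on the choice of $g_{S_{i}}$ only up to the $\textrm{GL}(W)$-action $\Gamma$, so a different trivialization replaces $\beta_{i}$ by its $\Gamma$-translate, which is immaterial for the statement. No idea beyond those already assembled — boundedness, representability of $\mathbb{D}$, and uniform generation — is required; the argument then runs as in \cite{Alexander2}.
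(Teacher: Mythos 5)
Your proposal is correct and follows essentially the same route as the paper, which simply defers to the argument of \cite[Proposition 5.1]{Alexander2}: boundedness and cohomology-and-base-change to trivialize $\pi_{S*}\F_{S}(n)$ locally, producing objects of the rigidified functor ${}^{\textrm{rig}}\textbf{SPB}(\rho)_{P}^{n}$, and then the representability of that functor by $\mathbb{D}$ to obtain the classifying morphisms $\beta_{i}$ before forgetting the rigidification.
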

\begin{proof}
Follows by the same argument given in \cite[Proposition 5.1]{Alexander2}.
\end{proof}
Consider the linearized invertible sheaf $\mathcal{M}$ on $Z'_{n,\Ox,P}$ with respect to the moduli space of $\delta$-semistable swamps is constructed, and let $\LL:=\beta^{*}\mathcal{M}$. 
\begin{proposition}\label{pasitointermedio}  In the above situation, we have:

1) All semistable points with respect to $\LL$ lie in $\mathbb{D}_{0}$.

2) A point $y\in\mathbb{D}_{0}$ is (semi)stable with respecto to $\LL$ if and only if the restriction of the universal singular principal G-bundle to $\{y\}\times C$ is $\delta$-(semi)stable.
\end{proposition}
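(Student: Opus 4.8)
The plan is to deduce both assertions from the GIT analysis of swamps already available for the linearization $\mathcal{M}$, transporting everything through the morphism $\beta\colon\mathbb{D}\slas\mathbb{C}^{*}\hookrightarrow Z'_{n,\Ox,P}$ of \eqref{injparam}. First I would record that, as in the smooth case \cite{Alexander2}, $\beta$ being proper and injective is a closed immersion onto an $\textrm{SL}(W)$-invariant closed subscheme, so that $\LL=\beta^{*}\mathcal{M}$ is ample and, crucially, Hilbert--Mumford weights are preserved: for $\bar y\in\mathbb{D}\slas\mathbb{C}^{*}$ and a one-parameter subgroup $\lambda$ of $\textrm{SL}(W)$ one has $\mu^{\LL}(\bar y,\lambda)=\mu^{\mathcal{M}}(\beta(\bar y),\lambda)$, because a closed immersion commutes with taking $\lambda$-limits, properness guarantees that the limit exists upstairs exactly when it exists downstairs, and $\LL$ and $\mathcal{M}$ agree $\mathbb{C}^{*}$-equivariantly on the fibre over the limit point. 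Via the two-step description of the $\textrm{GL}(W)$-quotient of $\mathbb{D}$ (first by $\mathbb{C}^{*}$, then by $\textrm{SL}(W)$) this identifies $\LL$-(semi)stability of a point of $\mathbb{D}$ with $\mathcal{M}$-(semi)stability of its image in $Z'_{n,\Ox,P}$.

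For part (1) I would then invoke the portion of the swamp GIT construction carried out for this very linearization in \cite{bosle,sols}, which applies verbatim here since $\beta$ realises $\mathbb{D}\slas\mathbb{C}^{*}$ as a closed invariant subscheme: for $n$ chosen large enough (using the boundedness and the vanishing $H^{1}(C,\F(n))=0$ recalled at the start of the subsection), every $\mathcal{M}$-semistable point of $Z'_{n,\Ox,P}$ represents a swamp $(\F,\phi)$ with $\F$ torsion free and with $W\to H^{0}(C,\F(n))$ an isomorphism. The standard mechanism is that torsion in $\F$, a subsheaf supported on a proper subcurve, or a proper subspace of $H^{0}(C,\F(n))$ each produces a one-parameter subgroup of $\textrm{SL}(W)$ whose weight is strictly positive for $n\gg 0$, contradicting semistability. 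By the weight comparison above, every $\LL$-semistable point of $\mathbb{D}$ therefore lies in $\mathbb{D}_{0}$.

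For part (2), given $y\in\mathbb{D}_{0}$ I would first note that, by the construction of $\beta$ out of the associated-swamp map $\tau\mapsto\phi_{\tau}$ of Section \ref{section5} and Theorem \ref{C} together with \eqref{ahi}, the image of $y$ in $Z'_{n,\Ox,P}$ represents exactly the associated swamp $(\F,\phi_{\tau})$ of the universal singular principal $G$-bundle restricted to $\{y\}\times C$. By the weight comparison, $y$ is $\LL$-(semi)stable if and only if this image is $\mathcal{M}$-(semi)stable, and by the swamp GIT \cite{bosle,sols} the latter holds if and only if $(\F,\phi_{\tau})$ is $\delta$-(semi)stable: concretely, the condition $W\cong H^{0}(C,\F(n))$ valid on $\mathbb{D}_{0}$ makes $\mu^{\mathcal{M}}(\beta(\bar y),\lambda)$ a positive multiple of $\sum_{i}m_{i}(\alpha P_{\F_{i}}-\alpha_{i}P)+\delta\,\mu(\F_{\bullet},\underline{m},\phi_{\tau})$, where $(\F_{\bullet},\underline{m})$ is the weighted filtration of $\F$ determined by $\lambda$, and conversely every weighted filtration arises this way. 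Chaining this with Definition \ref{semistability00}, which says $(\F,\phi_{\tau})$ is $\delta$-(semi)stable precisely when $(\F,\tau)$ is, yields (2).

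The hard part will not be the formal transfer but checking that the swamp GIT really does apply verbatim after passing to the closed invariant subscheme $\mathbb{D}\slas\mathbb{C}^{*}$: one must verify that $\mathcal{M}$ restricts to $\LL$ as the same linearization used in the general swamp moduli construction, that the large-$n$ estimates forcing torsion-freeness and $W\cong H^{0}(C,\F(n))$ are unaffected, and that the correspondence between one-parameter subgroups of $\textrm{SL}(W)$ and weighted filtrations of $\F$ loses and gains nothing under this restriction. All of this parallels Schmitt's treatment of the smooth case in \cite{Alexander2}, which is why the detailed bookkeeping can be imported rather than redone.
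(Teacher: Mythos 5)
Your proposal follows essentially the same route as the paper, whose proof is the one-line observation that the statement follows from the construction of $\beta$ in (\ref{injparam}), the definition of $\delta$-(semi)stability for singular principal bundles (Definition \ref{semistability00}), and the swamp moduli theory of Theorem \ref{teolanger}; you have merely made explicit the transfer of Hilbert--Mumford weights along the proper, injective, $\textrm{SL}(W)$-equivariant morphism $\beta$ with $\LL=\beta^{*}\mathcal{M}$ and the identification of $\mathcal{M}$-(semi)stability in $Z'_{n,\Ox,P}$ with $\delta$-(semi)stability of the associated swamp. One small inaccuracy: injective plus proper gives finiteness, not a closed immersion in general, but your weight comparison only uses properness and the pullback relation, so the argument is unaffected.
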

\begin{proof}
Follows from the construction of the morphism $\beta$ (see Equation \ref{injparam}), the definition of $\delta$-semistanility for singular principal bundles, and Theorem \ref{teolanger}.
\end{proof}

We finally have,
\begin{theorem}\label{B}
There is a projective scheme $\emph{SPB}(\rho)_{P}^{\delta\text{-(s)s}}$ and an open subscheme $\textrm{SPB}(\rho)_{P}^{\delta\text{-s}}\subset \textrm{\emph{SPB}}(\rho)_{P}^{\delta\text{-(s)s}}$ together with a natural transformation
\begin{equation*}
\alpha^{(s)s}\colon \textbf{\emph{SPB}}(\rho)_{P}^{\delta\textrm{-(s)s}}\rightarrow h_{\emph{SPB}(\rho)_{P}^{\delta\text{-(s)s}}}
\end{equation*}
with the following properties:

1) For every scheme $\N$ and every natural transformation $\alpha'\colon \emph{\textbf{SPB}}(\rho)_{P}^{\delta\textrm{-(s)s}}\rightarrow h_{\N}$, there exists a unique morphism $\varphi\colon \emph{SPB}(\rho)_{P}^{\delta\textrm{-(s)s}}\rightarrow\N$ with $\alpha'=h(\varphi)\circ\alpha^{(s)s}$.

2) The scheme $\emph{SPB}(\rho)_{P}^{\delta\textrm{-s}}$ is a coarse projective moduli space for the functor $\emph{\textbf{SPB}}_{P}(\rho)^{\delta\textrm{-s}}$
\end{theorem}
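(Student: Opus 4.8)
The plan is to realise $\textrm{SPB}(\rho)_{P}^{\delta\textrm{-(s)s}}$ as a GIT quotient of the parameter space $\mathbb{D}$ and to read off its universal properties from the Local Universal Property and the Gluing Property. Viewing the $\textrm{GL}(W)$-action on $\mathbb{D}$ as a $\mathbb{C}^{*}\times\textrm{SL}(W)$-action, I would first pass to $\mathbb{D}\slas\mathbb{C}^{*}$ and use the $\textrm{SL}(W)$-equivariant injective proper morphism $\beta\colon\mathbb{D}\slas\mathbb{C}^{*}\hookrightarrow Z'_{n,\Ox,P}$ of (\ref{injparam}), equipping $\mathbb{D}\slas\mathbb{C}^{*}$ with the linearization $\LL=\beta^{*}\mathcal M$. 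Then I would \emph{define}
$$\textrm{SPB}(\rho)_{P}^{\delta\textrm{-(s)s}}:=(\mathbb{D}\slas\mathbb{C}^{*})^{\textrm{ss}}_{\LL}\slas\textrm{SL}(W),\qquad \textrm{SPB}(\rho)_{P}^{\delta\textrm{-s}}:=(\mathbb{D}\slas\mathbb{C}^{*})^{\textrm{s}}_{\LL}/\textrm{SL}(W),$$
the latter a geometric quotient. By Proposition \ref{pasitointermedio}(1) all semistable points already lie in $\mathbb{D}_{0}$, and by (2) the $\LL$-(semi)stable locus is exactly the locus where the restriction of the universal singular principal $G$-bundle is $\delta$-(semi)stable, so this quotient parametrises precisely the right objects.

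For projectivity I would argue that a proper injective morphism is quasi-finite, hence finite; thus $\beta$ is finite, it descends to a finite morphism $\overline\beta\colon\textrm{SPB}(\rho)_{P}^{\delta\textrm{-(s)s}}\to M_{\textrm{swamps}}$ onto the projective moduli space $M_{\textrm{swamps}}=(Z'_{n,\Ox,P})^{\textrm{ss}}_{\mathcal M}\slas\textrm{SL}(W)$ of $\delta$-semistable swamps of Theorem \ref{teolanger}, and a quasi-projective scheme that is finite over a projective scheme is projective.

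Next I would build $\alpha^{(s)s}$. Given a family $(\F_{S},\tau_{S})$ of $\delta$-(semi)stable singular principal $G$-bundles over $S$, Proposition \ref{hastalaminga1} yields an open cover $\{S_{i}\}$ and morphisms $\beta_{i}\colon S_{i}\to\mathbb{D}$ pulling back the universal family; $\delta$-(semi)stability together with Proposition \ref{pasitointermedio} forces each $\beta_{i}$ into the preimage of $(\mathbb{D}\slas\mathbb{C}^{*})^{\textrm{ss}}_{\LL}$, so composing with the quotient maps gives $\gamma_{i}\colon S_{i}\to\textrm{SPB}(\rho)_{P}^{\delta\textrm{-(s)s}}$. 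On $S_{i}\cap S_{j}$ the two induced families are isomorphic, so Proposition \ref{hastalaminga2} shows that $\beta_{i}$ and $\beta_{j}$ differ étale-locally by the $\textrm{GL}(W)$-action, which the quotient map coequalises; hence $\gamma_{i}=\gamma_{j}$ there, the $\gamma_{i}$ glue to $\gamma_{S}\colon S\to\textrm{SPB}(\rho)_{P}^{\delta\textrm{-(s)s}}$, independent of the rigidification, and functoriality in $S$ gives $\alpha^{(s)s}$. For property (1), applying $\alpha'$ to the restriction of $(\F_{\mathbb{D}},\tau_{\mathbb{D}})$ to the semistable locus produces a morphism to $\N$ that is $\mathbb{C}^{*}$-invariant (rigidifications differing by a scalar give the same bundle) and $\textrm{SL}(W)$-invariant (Proposition \ref{hastalaminga2} applied to the two projections $\textrm{SL}(W)\times(\mathbb{D}\slas\mathbb{C}^{*})^{\textrm{ss}}_{\LL}\rightrightarrows(\mathbb{D}\slas\mathbb{C}^{*})^{\textrm{ss}}_{\LL}$), hence factors through the categorical quotient via a unique $\varphi$ with $\alpha'=h(\varphi)\circ\alpha^{(s)s}$; uniqueness of $\varphi$ follows from surjectivity of $(\mathbb{D}\slas\mathbb{C}^{*})^{\textrm{ss}}_{\LL}\to\textrm{SPB}(\rho)_{P}^{\delta\textrm{-(s)s}}$. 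For property (2), on the stable locus the $\textrm{SL}(W)$-orbits are closed with finite stabilisers, so closed points of $\textrm{SPB}(\rho)_{P}^{\delta\textrm{-s}}$ biject with such orbits, and by Proposition \ref{hastalaminga2} together with injectivity of $\beta$ these are exactly the isomorphism classes of $\delta$-stable singular principal $G$-bundles with Hilbert polynomial $P$; with (1) this is the coarse projective moduli statement.

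The GIT formalism here is routine once $\mathbb{D}$ and $\beta$ are in hand; the real content sits in the Gluing Property (Proposition \ref{hastalaminga2}), on which both the well-definedness of $\alpha^{(s)s}$ and the $\textrm{SL}(W)$-invariance in property (1) rest, and whose proof is imported from the smooth case via the local universal property of the Quot scheme. The structural point that makes this construction legitimate in the nodal, varying-curve setting is Theorem \ref{C}: it guarantees that one type $(a,b)$, hence one swamp parameter space $Z'_{n,\Ox,P}$, one linearization $\mathcal M$, and one embedding $\beta$, serve every curve of genus $g$, so that no ingredient of the construction secretly depends on $C$.
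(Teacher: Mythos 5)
Your proposal is correct and follows essentially the same route as the paper: the paper's own proof is a one-line appeal to Propositions \ref{hastalaminga2}, \ref{hastalaminga1}, \ref{pasitointermedio} and Theorem \ref{teolanger} to conclude that the two-step GIT quotient $\mathbb{D}^{(s)s}\slas(\mathbb{C}^{*}\times\mathrm{SL}(W))$ exists, is projective, and has properties 1) and 2), which is exactly the argument you spell out in detail (including the finiteness of $\beta$ for projectivity and the gluing/invariance arguments for the natural transformation).
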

\begin{proof}
By Proposition \ref{hastalaminga2}, Proposition \ref{hastalaminga1}, Proposition \ref{pasitointermedio} and Theorem \ref{teolanger}, the quotients $\textrm{SPB}(\rho)_{P}^{\delta\textrm{-(s)s}}:=\mathbb{D}^{(s)s}\slas (\mathbb{C}^{*}\times\textrm{SL}(V))$ exist, $\mathbb{D}^{ss}\slas (\mathbb{C}^{*}\times \textrm{SL}(V))$ is a projective scheme and satisfies 1) and 2). 
\end{proof}

\subsection{The relative moduli space}

Let $S$ be a $\mathbb{C}$-scheme, $\pi:\CC\rightarrow S$ a flat family of nodal curves, $\textbf{Sch}_{S}$ the category of $S$-schemes, $\delta\in\mathbb{Q}_{>0}$ and $P(n)$ a polynomial of degree one with integral coefficients.  Consider now the moduli functor,

\begin{align*}
\textbf{SPB}_{\CC/S}(\rho)_{P}^{\delta\textrm{-(s)s}}:\textbf{Sch}_{S}&\longrightarrow \textbf{Set}\\
T&\longmapsto  \left \{   \begin{array}{c}\textrm{isomorphism classes of families of }\delta\textrm{-(semi)} \\  \textrm{stable singular principal }G\textrm{-bundles on }\CC\times_{S}T  \\  \textrm{ parametrized } \textrm{by } T\textrm{ with Hilbert polynomial }P \end{array}  \right\}
\end{align*}

Note that we can construct the relative parameter space for families of singular principal $G$-bundles copying almost word by word the construction of the parameter space for the fiberwise problem. With this in hand, we can establish the natural transformation (\ref{ahi}) corresponding to the relative problem due to uniform linearization given in Theorem \ref{C}. With this and Theorem \ref{teolanger} we can state the last theorem.

\begin{theorem}\label{D}
Let $\pi:\CC\rightarrow S$ be a flat family of nodal curves, $\Oo(1)$ a relative polarization with degree $h$ on the fibers, $G$ a (semisimple) reductive group, $V$ a vector space of dimension $r$, $\rho:G\hookrightarrow \emph{SL}(V)$ a fully faithful representation and $P$ a polynomial of degree one with integral coefficients such that leading coefficient is $h\cdot r$. Then there exists a coarse $S$-projective moduli space for $\delta$-semistable singular principal $G$-bundles, $(\F_S,\tau_S)$ such that $\F_S$ has Hilbert polynomial $P$ on the fibers.
\end{theorem}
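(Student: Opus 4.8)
The plan is to mimic, in families over $S$, the GIT construction of Section \ref{section6}, reducing everything to Langer's relative moduli space of swamps (Theorem \ref{teolanger}); the point that makes the reduction uniform over $S$ is precisely Theorem \ref{C}. First I would reduce to the case in which the fibres of $\pi\colon\CC\rightarrow S$ have constant arithmetic genus $g$: by flatness the arithmetic genus is locally constant, so $S$ decomposes into finitely many locally closed pieces on each of which $g$ (and, by hypothesis, the degree $h$) is fixed, and it suffices to treat one piece and reassemble at the end. On such a piece Theorem \ref{C} (via Theorem \ref{gen} and Remark \ref{genuniforme}) produces integers $a=s!$ and $b=N$ depending only on $r$ and $g$, hence constant over $S$, which govern the associated swamp. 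Using the boundedness of the torsion free sheaves $\F$ with Hilbert polynomial $P$ occurring in $\delta$-(semi)stable swamps, recalled from \cite[Theorem 6.1]{L} in its relative form, I would then fix a single $n\in\mathbb{N}$, large enough over all of $S$, so that $\F(n)$ is globally generated with vanishing $h^{1}$ on every fibre, together with a vector space $W$ of dimension $P(n)$.

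Next I would build the relative rigidified parameter scheme $\mathbb{D}\rightarrow S$ by copying verbatim the construction of Section \ref{section6}: form the relative Quot scheme $\mathcal{Q}\rightarrow S$ of quotients $W\otimes\Oo_{\CC}(-n)\twoheadrightarrow\F$ with Hilbert polynomial $P$ on the fibres, set $\mathcal{Q}^{*}=\mathcal{Q}\times_{S}\bigoplus_{i=1}^{s}\underline{\Hom}(S^{i}(V\otimes W),\pi_{*}\Oo_{\CC}(in))$, and cut out the closed relative subscheme $\mathbb{D}=\bigcap_{d\geq0}\mathbb{D}^{d}$ over which the tautological data lift to an honest morphism of $\Oo$-algebras; \cite[Lemma 3.1]{sols} works in families, so $\mathbb{D}$ represents the relative rigidified functor $^{\textrm{rig}}\textbf{SPB}_{\CC/S}(\rho)^{n}_{P}$. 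Because $a$ and $b$ are constant over $S$, the uniform linearization $\tau\mapsto\varphi_{\tau}$ of Section \ref{section5} yields a $\mathbb{C}^{*}$-invariant natural transformation into the relative rigidified swamp functor of type $(a,b)$, and hence, exactly as in Equation \ref{injparam}, an $\textrm{SL}(W)$-equivariant injective proper $S$-morphism $\beta\colon\mathbb{D}\slas\mathbb{C}^{*}\hookrightarrow Z'_{n,\Oo,P}$ into the relative parameter space of swamps over $\CC/S$. Pulling back the linearized ample sheaf $\mathcal{M}$ that Langer uses to construct the relative swamp moduli space gives $\LL=\beta^{*}\mathcal{M}$.

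Finally I would carry the three structural statements over to the relative setting: the relative Gluing Property and the relative Local Universal Property follow from the arguments of \cite[Propositions 5.1 and 5.2]{Alexander2} applied over $S$, and the relative analogue of Proposition \ref{pasitointermedio} follows from properness and injectivity of $\beta$ together with Theorem \ref{teolanger}, so that the $\LL$-semistable points of $\mathbb{D}$ lie in $\mathbb{D}_{0}$ and $\LL$-(semi)stability agrees fibrewise with $\delta$-(semi)stability of the singular principal $G$-bundle. The GIT quotient $\mathbb{D}^{(s)s}\slas(\mathbb{C}^{*}\times\textrm{SL}(W))$ then exists; since $\beta$ is injective and proper and the target carries a relatively projective good quotient (Theorem \ref{teolanger}), the quotient of the semistable locus is a closed subscheme of the relative moduli of swamps, hence $S$-projective, and it is the desired coarse $S$-projective moduli space, with its stable locus coarsely representing $\textbf{SPB}_{\CC/S}(\rho)^{\delta\textrm{-s}}_{P}$.

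I expect the main obstacle to be exactly this uniformity over $S$: one must guarantee that a single rigidification (one $n$, one $W$) and a single swamp type $(a,b)$ suffice over the whole family, since otherwise the target $Z'_{n,\Oo,P}$ would not even make sense as a single space. This is what Theorem \ref{C} together with the relative boundedness statement provide; the accompanying locally closed decomposition of $S$ used to fix $g$ and $n$ is harmless, because the coarse moduli spaces produced over the pieces glue along $S$ and the construction is compatible with base change on $S$.
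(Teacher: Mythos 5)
Your proposal is correct and follows essentially the same route as the paper, which itself only sketches this proof: copy the fiberwise parameter-space construction of Section \ref{section6} in families over $S$, use Theorem \ref{C} to get a swamp type $(a,b)$ (and hence a linearization) uniform over $S$, and conclude via Langer's relative moduli space of swamps (Theorem \ref{teolanger}). Your extra step of stratifying $S$ to fix the genus is harmless and in fact unnecessary beyond taking connected components, since $\chi(\Oo_{\CC_s})$ is locally constant in a flat family.
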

\begin{remark}
Suppose that $S=\textrm{Spec} \ A$, $A$ being a discrete valuation ring. Assume also that the special fiber $\CC_{0}$ is a stable curve and the generic fiber $\CC_{\eta}$ is smooth.

1. Assume further that $\CC_0$ is irreducible. By the results of  \cite{Alexander1} we can assume that there is a rational number $\delta$ large enough such that both, $(\textrm{SPB}_{\CC/S}(\rho)_{P}^{\delta\textrm{-(s)s}})_\eta$ and $(\textrm{SPB}_{\CC/S}(\rho)_{P}^{\delta\textrm{-(s)s}})_0$, parametrizes semistable honest singular principal $G$-bundles. From \cite{Alexander2} we know that $(\textrm{SPB}_{\CC/S}(\rho)_{P}^{\delta\textrm{-(s)s}})_\eta$ coincides with the moduli space of semistable principal $G$-bundles. Theorem \ref{D} applied  to this situation was proved by G. Faltings in a more general way for the orthogonal and symplectic groups \cite{Fal}.

2. A natural question is whether the generic fiber $(\textrm{SPB}_{\CC/S}(\rho)_{P}^{\delta\textrm{-(s)s}})_\eta$ is dense or not. This was studied by X. Sun in a series of papers for the special linear group \cite{sun1,sun2} . A. Schmitt gave in \cite{Alexander1} a scheme-theoretic structure to the closure of the generic fiber in the whole space by using generalized parabolic structures, and it seems to be not clear, a priori, that this question has a positive answer.

\end{remark}

\section{\footnotesize A REMARK ON THE UNIVERSAL MODULI SPACE OVER THE MODULI SPACE OF STABLE CURVES}\label{section7}
\markboth{\emph{}}{} 

It is important to note that the consequences of Theorem \ref{C} are deeper than those deduced so far. To show that, let us recall briefly the construction of the moduli space of stable curves (see \cite{gies} for the whole construction) and apply Theorem \ref{D} to the universal curve of genus $g\geq 2$.

\subsection{Gieseker construction of $\overline{\mathcal{M}}_{g}$}

Fix integers $g\geq 2$, $d=10(2g-2)$ and $N=d-g$. Consider the Hilbert functor
\begin{equation*}
\textbf{Hilb}_{d,g}(S)=\left\{ 
\begin{array}{ccc}
Z\rightarrow & \hspace{-0.4cm}S \times\mathbf{P}^{N} & Z_{s}\textrm{ being a }\\
\hspace{0.5cm}\searrow & \hspace{-0.7cm}\downarrow  &  \textrm{, projective curve of genus } 
\\
& \hspace{-0.7cm}S & g \textrm{ and degree }d \ \forall s\in S
\end{array} 
 \right\}.
\end{equation*}

Let us denote by $H_{g,d,N}$ the representative of $\textbf{Hilb}_{d,g}$ and $H_{g}\subset H_{g,d,N}$ the locus of non-degenerate, 10-canonical Delign-Mumford stable curves of genus $g$. $H_{g}$ is a locally closed subscheme and it is a nonsingular, irreducible quasi-projective variety.

Recall that $N=10(2g-2)-g$, $\mathbb{C}^{N+1}\simeq H^{0}(C,\omega_{C}^{10})$ and $h(s)=ds-g+1$. Then $H_{g,d,N}$ is the Quot scheme,
$\textrm{Quot}^{h(s)}_{\mathscr{O}_{\mathbf{P}^{N}}/\mathbf{P}^{N}/ \mathbb{C}}$, 
and there exists an integer $s_{\alpha}$ such that $\forall s\geq s_{\alpha}$,
$i'_{s}:H_{d,g,N}\hookrightarrow \textrm{Grass}(h(s),H^{0}(\mathbf{P}^{N},\mathscr{O}_{\mathbf{P}^{N}}(s))^{\vee})$
is a closed immersion. Moreover, there exists $\overline{s}(g)$ such that the GIT linearized problem $i'_{\overline{s}(g)}$ satisfies:
(1) $H_{g}$ belongs to the semistable locus,
(2) $H_{g}$ is closed in the semistable locus.
Finally, the action of $\textrm{SL}_{N+1}$ on $\mathbf{P}^{N}$ induces an action on $H_{g}$, and Gieseker shows that
$
\overline{\mathcal{M}}_{g}=H_{g}/\textrm{SL}_{N+1}.
$

\subsection{A remark on the universal moduli space}

Let us denote by $\mathscr{U}_{g}$ the relative universal curve
\begin{equation}\label{uno}
\xymatrix{
\mathscr{U}_{g}\ar@{^(->}[rr]^{\small{\textrm{closed}}}_{\psi}\ar[rrd]^{\nu}_{\textrm{flat}}& & H_{g}\times\mathbf{P}^{N} \ar[r]^{d}\ar[d]^{e} & \mathbf{P}^{N} \\
& & H_{g} \ \ , & 
}
\end{equation}
and let $\nu:\mathscr{U}_{g}\rightarrow\mathbf{P}^{N}$ be the second projection. Consider the relative very ample line bundle $\nu^{*}\Oo_{\mathbf{P}^{N}}=:\Oo(1)$ on the universal curve. We can now apply Theorem \ref{D} to the family $\nu:\mathscr{U}_{g}\rightarrow H_{g}$ and the relative polarization $\Oo(1)$. Thus, we get a moduli space over $H_{g}$, $\textrm{SPB}_{\mathscr{U}_{g}/H_{g}}(\rho)_{P}^{\delta\textrm{-(s)s}}\rightarrow H_{g}$.
The $\textrm{SL}_{N+1}$-action on $H_{g}$ lifts to an $\textrm{SL}_{N+1}$-action on $\textrm{SPB}_{\mathscr{U}_{g}/H_{g}}(\rho)_{P}^{\delta\textrm{-(s)s}}$. Then we have to study the GIT problem associated to this $\textrm{SL}_{N+1}$-action if we want to construct the universal moduli space over $\overline{\mathcal{M}}_{g}$. Following \cite{Pando}, we need to show that all the numerical parameters involved in finding the right polarization on the parameter space for the fiberwise problem do not depend on the base curve. Therefore, the uniformity of the linearization given in Theorem \ref{C} reduces the construction of the universal moduli space for $\delta$-semistable singular principal $G$-bundles to the construction of the universal moduli space for $\delta$-semistable swamps. 

\begin{notation}
The results presented in this paper are part of the author's Ph.D. thesis at Freie Universit\"at Berlin. The author would like to thank Prof. Alexander Schmitt for his encouragement and support.
\end{notation}

\bibliographystyle{amsplain}
\bibliography{mibiblio2}

\providecommand{\bysame}{\leavevmode\hbox to3em{\hrulefill}\thinspace}
\providecommand{\MR}{\relax\ifhmode\unskip\space\fi MR }
\providecommand{\MRhref}[2]{%
  \href{http://www.ams.org/mathscinet-getitem?mr=#1}{#2}
}
\providecommand{\href}[2]{#2}
\begin{thebibliography}{10}

\bibitem{bosle}
U.~Bhosle, \emph{Tensor fields and singular principal bundles}, International
  Mathematics Research Notices \textbf{2004} (2004), 3057--3077.

\bibitem{mum-ir}
P.~Deligne and D.~Mumford, \emph{On the irreducibility of the space of curves
  of a given genus}, IHES \textbf{36} (1961), 75--109.

\bibitem{Fal}
G.~Faltings, \emph{Moduli stacks for bundles on semistable curves},
  Mathematische Annalen \textbf{304} (1996), 489--515.

\bibitem{gies}
D.~Gieseker, \emph{On the moduli of vector bundles on an algebraic surface},
  Annals of Mathematics \textbf{106} (1977), 45--60.

\bibitem{sols}
T.~G\'omez and I.~Sols, \emph{Stable tensors and moduli space of orthogonal
  sheaves}, ArXiv: math.AG/0103150 (2001).

\bibitem{sols2}
\bysame, \emph{Moduli space of principal sheaves over projective varieties},
  Annals of Mathematics \textbf{161} (2005), 1037--1092.

\bibitem{L}
A.~Langer, \emph{Moduli spaces of principal bundles on singular varieties},
  Kyoto Journal of Mathematics \textbf{53} (2013), 3--23.

\bibitem{Pando}
P.~Pandharipande, \emph{A compactification over $\overline{M}_{g}$ of the
  universal moduli space of slope-semiestable vector bundles}, Journal of the
  American Mathematical Society \textbf{9} (1996), 425--471.

\bibitem{Rama}
A.~Ramanathan, \emph{Moduli of principal bundles on algebraic curves
  \textrm{I}.}, Proceedings of the Indian Academy of Science \textbf{106}
  (1996), 301--328.

\bibitem{Alexander2}
A.~Schmitt, \emph{Singular principal bundles over higher-dimensional manifolds
  and their moduli spaces}, International Mathematics Research Notices
  \textbf{2002} (2002), 1183--1210.

\bibitem{Alexander4}
\bysame, \emph{Moduli spaces for semistable honest singular principal
  \textrm{G}-bundles on a nodal curve which are compatible with
  degenerations-\textrm{A} remark on \textrm{B}hosle's paper "\textrm{T}ensor
  fields and singular principal bundles"}, International Mathematics Research
  Notices \textbf{2005} (2005), 1427--1436.

\bibitem{Alexander1}
\bysame, \emph{Singular principal \textrm{G}-bundles on nodal curves}, Journal
  of the European Mathematical Society \textbf{7} (2005), 215--251.

\bibitem{Asch}
\bysame, \emph{Geometric invariant theory and decorated principal bundles},
  European Mathematical Society, 2008.

\bibitem{Seshadri}
C.~S. Seshadri, \emph{Fibr\'es vectoriels sur les courbes alg\'ebriques},
  Ast\'erisque, 1982.

\bibitem{sun1}
X.~Sun, \emph{Degenerations of $\textrm{SL}(r)$-bundles on a reducible curve},
  Proceedings of the Symposium on Algebraic Geometry in East Asia, 2001,
  pp.~3--10.

\bibitem{sun2}
\bysame, \emph{Moduli spaces of $\textrm{SL}(r)$-bundles on singular
  irreducible curves}, Asian Journal of Mathematics \textbf{7} (2003),
  609--626.

\end{thebibliography}

\end{document}